\documentclass[11pt,a4paper]{article}

\usepackage{mathptmx}
\usepackage{amsmath,amssymb,amsfonts,amsthm}
\usepackage[english]{babel}
\usepackage{tikz,float}
\usepackage{enumerate}
\usepackage{url}
\usepackage{a4}
\theoremstyle{plain}
\newtheorem{theorem}{Theorem}[section]
\newtheorem{lemma}[theorem]{Lemma}
\theoremstyle{remark}
\newtheorem{remark}[theorem]{Remark}
\theoremstyle{definition}
\newtheorem{assumption}{Assumption}

\newtheorem{definition}[theorem]{Definition}
\newtheorem{example}[theorem]{Example}
\newcommand{\ZZ}{\mathbb{Z}}
\newcommand{\RR}{\mathbb{R}}
\newcommand{\CC}{\mathbb{C}}
\newcommand{\NN}{\mathbb{N}}
\newcommand{\EE}{\mathbb{E}}

\newcommand{\drm}{\ensuremath{\mathrm{d}}}
\newcommand{\abs}[1]{\left\vert#1\right\vert}
\newcommand{\sprod}[2]{\left\langle#1,#2 \right \rangle}
\newcommand{\norm}[1]{\left\Vert#1\right\Vert}
\newcommand{\Pro}{\ensuremath{P}}

\newcommand{\euler}{\mathrm{e}}

\DeclareMathOperator{\supp}{\operatorname{supp}}

\DeclareMathOperator{\im}{Im}

\renewcommand{\epsilon}{\varepsilon}
\renewcommand{\i}{\ensuremath{{\mathrm{i}}}}
\newcommand{\ii}{\ensuremath{{\mathrm{i}}}}
%
%--------------------------------------------
%
\newcommand{\BIGOP}[1]{\mathop{\mathchoice%
{\raise-0.22em\hbox{\huge $#1$}}%
{\raise-0.05em\hbox{\Large $#1$}}{\hbox{\large $#1$}}{#1}}}
\newcommand{\BIGboxplus}{\mathop{\mathchoice%
{\raise-0.35em\hbox{\huge $\boxplus$}}%
{\raise-0.15em\hbox{\Large $\boxplus$}}{\hbox{\large $\boxplus$}}{\boxplus}}}
\newcommand{\bigtimes}{\BIGOP{\times}}
\usepackage{microtype}
\begin{document}

\title{Localization criteria for Anderson models on locally finite graphs}
\author{Martin Tautenhahn}
\date{Technische Universit\"a{}t Chemnitz \\ Fakult\"a{}t f\"u{}r Mathematik \\ D-09107 Chemnitz \\ {\small URL: \url{http://www.tu-chemnitz.de/~mtau}}}

% \address{Martin Tautenhahn, Technische Universit\"a{}t Chemnitz, Fakult\"a{}t f\"u{}r Mathematik, D-09107 Chemnitz}
% \urladdr{www.tu-chemnitz.de/~mtau}
% \email{martin.tautenhahn@mathematik.tu-chemnitz.de}
%  % The correct dates will be entered by the editor

% \keywords{Anderson model, localization, fractional moment method, locally finite graphs}
% \subjclass{82B44, 60H25, 35J10}

%\PACS{71.23.An}

\maketitle
\begin{abstract}
We prove spectral and dynamical localization for Anderson models on locally finite graphs using the fractional moment method. Our theorems extend earlier results on localization for the Anderson model on $\ZZ^d$. We establish geometric assumptions for the underlying graph such that localization can be proven in the case of sufficiently large disorder. 
\\[2ex]
\textbf{Keywords:} Anderson model, localization, fractional moment method, locally finite graphs \\[2ex]
\textbf{2010 Mathematics Subject Classification:} 82B44, 60H25, 35J10
\end{abstract}
\section{Introduction}
A strong form of idealization in solid state physics is to deal with ideal crystals,
whose quantum mechanical properties can be modeled with periodic selfadjoint operators. Such operators always exhibit only absolute continuous spectrum. However, true materials will have distortions (e.\,g. dislocations, vacancies, presence of impurity atoms), and their modeling leads to the study of random Schr\"o{}dinger operators. In this paper we study spectral properties of certain discrete random Schr\"o{}dinger operators $H$. A motivation for studying spectral properties is their close relation (e.\,g. via the RAGE-Theorem) with dynamical properties of the wave packets $\psi_t (\cdot) = \euler^{-\i t H} \psi_0 (\cdot)$ governed by the time-dependent Schr\"o{}dinger equation.
\par
A fundamental result for random Schr\"o{}dinger operators is the physical phe\-nom\-e\-non of localization. There are different mathematical formulations for this. We discuss two of them; spectral localization, i.\,e. the almost sure absence of continuous spectrum, and dynamical localization, i.\,e. the wave packets $\psi_t (\cdot) = \euler^{-\i t H} \psi_0 (\cdot)$ stay trapped in a finite region of space for all time (almost surely).
\par
The prototype of a random Schr\"o{}dinger operator is the so-called Anderson model \cite{Anderson1958}. The Anderson model is given by the discrete random Schr\"o{}dinger operator $h_\omega = h_0 + \lambda v_\omega$ on $\ell^2 (\ZZ^d)$ where $h_0$ denotes the negative discrete Laplacian, representing the kinetic energy, $v_\omega$ is a random potential given by a collection of independent identically distributed (i.\,i.\,d.) random variables $\{\omega_k \}_{k \in \ZZ^d}$, and the parameter $\lambda \geq 0$ measures the strength of the disorder. In the higher-dimensional case there are two methods to prove spectral and dynamical localization, respectively. The \emph{multiscale analysis} introduced by Fr\"o{}hlich and Spencer in \cite{FroehlichS1983} and further developed, e.\,g., in \cite{FroehlichMSS1985,DreifusK1989}, and the \emph{fractional moment method} introduced by Aizenman and Molchanov in \cite{AizenmanM1993} and further developed, e.\,g., in \cite{Graf1994,Hundertmark2000,AizenmanFSH2001}. The first paper concerning dynamical localization for the Anderson model on $\ZZ^d$ was \cite{Aizenman1994}. A typical localization result in the case of large disorder (and under some mild regularity condition on the distribution of $\{\omega_k\}_{k \in \ZZ^d}$) according to the fractional moment method is the following statement: 
\begin{theorem}[\cite{AizenmanM1993}] \label{theorem:aizenman}
 Let $s \in (0,1)$ and assume that $\lambda > (2d C)^{1/s}$ where $C$ is a constant depending only on $s$ and on the distribution of $\{\omega_k\}_{k \in \ZZ^d}$. Then $h_\omega$ has almost surely only pure point spectrum with exponentially localized eigenfunctions.
\end{theorem}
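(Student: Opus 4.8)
The plan is to run the fractional moment method in finite volume. Fix $s\in(0,1)$ once and for all. For a finite $\Lambda\subset\ZZ^d$ write $h_{\omega,\Lambda}$ for the restriction of $h_\omega$ to $\ell^2(\Lambda)$ and $G_\Lambda(x,y;z)=\sprod{\delta_x}{(h_{\omega,\Lambda}-z)^{-1}\delta_y}$ for its Green's function, $z\in\CC\setminus\RR$. The argument rests on two quantitative facts, uniform in $\Lambda$ and $z$: (i) an a priori bound $M:=\sup_{\Lambda,x,y,z}\EE\bigl[\abs{G_\Lambda(x,y;z)}^s\bigr]<\infty$; and (ii) exponential decay $\EE\bigl[\abs{G_\Lambda(x,y;z)}^s\bigr]\le M\,\euler^{-\mu\abs{x-y}}$ with $\mu=\log\bigl(\lambda^s/(2dC)\bigr)>0$, valid as soon as $\lambda>(2dC)^{1/s}$.

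For (i) one conditions on all randomness except the coupling $\omega_x$ at a fixed site. The Krein (rank-one) resolvent formula shows $G_\Lambda(x,x;z)=(\lambda\omega_x+\xi)^{-1}$ with $\xi\in\CC$ independent of $\omega_x$, so $\EE_{\omega_x}\bigl[\abs{G_\Lambda(x,x;z)}^s\bigr]=\lambda^{-s}\int\abs{v+\xi/\lambda}^{-s}\,\drm\mu(v)\le\lambda^{-s}\,C$, where $C:=\sup_{w\in\CC}\int\abs{v-w}^{-s}\,\drm\mu(v)$ is finite precisely because $s<1$ and the single-site law $\mu$ is regular (e.g.\ has a bounded density); this $C$ is the constant appearing in the theorem. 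Off-diagonal finiteness follows by one further rank-one reduction at the endpoint $y$. For (ii), a Schur-complement computation (removing the site $x$) gives $\abs{G_\Lambda(x,y;z)}\le\abs{\lambda\omega_x+\xi}^{-1}\sum_{x'\sim x}\abs{G_{\Lambda\setminus\{x\}}(x',y;z)}$ for $x\neq y$, where $\xi$ and the Green's functions $G_{\Lambda\setminus\{x\}}(x',y;z)$ of the box with $x$ deleted do not depend on $\omega_x$. Raising to the power $s$, using $\bigl(\sum_i a_i\bigr)^s\le\sum_i a_i^s$, and taking $\EE_{\omega_x}$ with the bound from (i) yields the self-consistent inequality $\EE\bigl[\abs{G_\Lambda(x,y;z)}^s\bigr]\le(C/\lambda^s)\sum_{x'\sim x}\EE\bigl[\abs{G_{\Lambda\setminus\{x\}}(x',y;z)}^s\bigr]$. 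With $b_n:=\sup\bigl\{\EE[\abs{G_U(x,y;z)}^s]:U\ \text{finite},\ x,y\in U,\ \abs{x-y}\ge n,\ z\in\CC\setminus\RR\bigr\}$ this gives $b_n\le(2dC/\lambda^s)\,b_{n-1}$, and since $b_0\le M$ we obtain $b_n\le M(2dC/\lambda^s)^n$, i.e.\ (ii). Letting $\Lambda\uparrow\ZZ^d$, strong resolvent convergence for fixed $\omega$ combined with Fatou's lemma gives the same bound for $h_\omega$ and all $z=E+\i\epsilon$, $E\in\RR$, $\epsilon>0$, and then, again by Fatou, for the boundary values $\epsilon\downarrow0$.

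It remains to convert averaged fractional-moment decay into the almost-sure spectral statement, which I would do through the eigenfunction correlator. A standard estimate bounds $\EE\bigl[\sup_{\norm{f}_\infty\le1,\ \supp f\subset I}\abs{\sprod{\delta_x}{f(h_\omega)\delta_y}}\bigr]$ — in particular the averaged spectral-projection kernel and $\EE\bigl[\sup_t\abs{\sprod{\delta_x}{\euler^{-\i t h_\omega}P_I(h_\omega)\delta_y}}\bigr]$ — by a constant times $\int_I\EE[\abs{G(x,y;E+\i0)}^s]\,\drm E$, so the eigenfunction correlator decays like $\euler^{-\mu\abs{x-y}}$ on any bounded interval (the spectrum being bounded when $\mu$ has compact support). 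Exponential decay of the eigenfunction correlator gives dynamical localization at once, and, via a Borel--Cantelli argument forcing every generalized eigenfunction to decay exponentially together with the RAGE theorem, it gives that $h_\omega$ has almost surely pure point spectrum with exponentially localized eigenfunctions.

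I expect the main obstacle to be step (ii), and specifically the decoupling that produces the self-consistent inequality with the constant $C$ of (i): this is exactly where the hypothesis $\lambda>(2dC)^{1/s}$ is forced, and it requires controlling simultaneously the rank-one dependence of $G_\Lambda(x,y;z)$ on $\omega_x$ and the regularity of $\mu$. The remaining ingredients — the off-diagonal a priori bound, the domain bookkeeping in the iteration, the infinite-volume limit, and assembling the eigenfunction-correlator estimate — are comparatively routine, though the eigenfunction-correlator passage is where most of the remaining technical work sits.
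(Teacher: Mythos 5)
Your proposal is correct in outline, but note first that the paper itself gives no proof of Theorem~\ref{theorem:aizenman}: it is quoted from \cite{AizenmanM1993}, and the honest comparison is with the paper's proofs of its generalizations, Theorems~\ref{theorem:result1}--\ref{theorem:result3}. Up to the Green-function estimates your route and the paper's coincide: your a priori bound (i) is Lemma~\ref{lemma:finitness2} (the rank-one representation \eqref{eq:green_rep} plus the spectral-averaging bound \eqref{lemma:average}), and your self-consistent inequality in (ii) is exactly the depletion identity \eqref{eq:sawrepresentation}/\eqref{eq:iteration}; the difference is bookkeeping, since you bound each iteration step crudely by the coordination number $2d$, giving $b_n\le(2dC\lambda^{-s})^n M$, whereas the paper keeps the full sum over self-avoiding walks, $C^{d(x,y)}c_x(d(x,y))$, which is precisely what later allows it to replace $2d$ by the connective constant and to treat graphs without a uniform degree bound. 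You also work in finite volume and pass to the limit, while the paper works directly in infinite volume (relevant there because its Laplacian may be unbounded; cf.\ its remark on \cite{Hundertmark2008}). Where you genuinely diverge is the passage from moment bounds to localization: you go through the eigenfunction correlator in the spirit of \cite{Aizenman1994,AizenmanG1998,AizenmanFSH2001}, which yields dynamical localization and, via a Borel--Cantelli/generalized-eigenfunction argument, exponentially localized eigenfunctions, i.e.\ the full statement of Theorem~\ref{theorem:aizenman}; the paper instead bounds second moments by fractional moments (Lemma~\ref{lemma:s=2}, Graf's trick) and then uses Ruelle's criterion together with the local Graf inequality (Lemma~\ref{lemma:graf}) for pure point spectrum, and a variant of Stone's formula (Lemma~\ref{lemma:stolz}) for dynamical localization --- a route which, as written, does not produce the ``exponentially localized eigenfunctions'' clause, while yours does. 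The one place your sketch hides substantial work is the correlator estimate $\EE\bigl\{\sup_{\lVert f\rVert_\infty\le 1}\lvert\langle\delta_x,f(h_\omega)P_I\delta_y\rangle\rvert\bigr\}\le c_s\int_I\EE\bigl\{\lvert G(x,y;E+\i\epsilon)\rvert^s\bigr\}\drm E$ (uniformly in $\epsilon>0$, which is preferable to invoking boundary values $E+\i 0$): this is a known but nontrivial step and should be cited or proved rather than asserted as standard; by contrast, the decoupling step you flag as the main obstacle is, thanks to the rank-one structure in \eqref{eq:green_rep}, the easy part.
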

Aizenman and Molchanov stated in \cite{AizenmanM1993} that this result also applies to random Schr\"o{}\-ding\-er operators on graphs with a uniform bound on the vertex degree. The quantity $2d$ in Theorem~\ref{theorem:aizenman} then has to be replaced by the uniform bound on the vertex degree. 
\par
In this paper we extend Theorem \ref{theorem:aizenman} to Anderson models on a certain class of locally finite graphs including all graphs with a uniform bound on the vertex degree and also certain graphs which have no uniform bound on the vertex degree. We prove spectral and dynamical localization in the case of sufficiently large disorder. We establish a geometric quantity which corresponds to the term $2d$ from Theorem~\ref{theorem:aizenman}. For a class of graphs which Hammersley \cite{Hammersley1957} called crystals, this quantity can be related to the so-called connective constant.
%
% %
% %%%%%%%%%%%%%%%%%%%%%%%%%%%%%%%%%%%%%%%%%%%%%%%%%%%%%%%%%%%%%%%%%%%%%%%%%%%%%%%%%%%
% %----------------------------------------------------------------------------------
% %         M O D E L    A N D    R E S U L T S
% %----------------------------------------------------------------------------------
% %%%%%%%%%%%%%%%%%%%%%%%%%%%%%%%%%%%%%%%%%%%%%%%%%%%%%%%%%%%%%%%%%%%%%%%%%%%%%%%%%%%
% %
%
\section{Model and results}\label{sec:model}
Let $G = (V,E)$ be an infinite, locally finite, connected graph without loops or multiple edges where $V = V (G)$ is the set of vertices and $E = E(G)$ is the set of edges. We use the notation $x \sim y$ to indicate that an edge connects the vertices $x$ and $y$. Moreover, we denote by $m(x)$ the valence at a vertex $x \in V$, that is, $m(x) := \# \{y \in V \colon y \sim x\}$ denotes the number of vertices connected by an edge to $x$. Recall that locally finite means $m(x) < \infty$ for all $x \in V$, while $m(x)$ is not necessarily uniformly bounded. As usual, we denote by $d(x,y)$ the graph distance of $x$ and $y$, that is, the length of the shortest $x-y$ path in $G$, where the length of a path is the number of its edges. Recall that $G$ is assumed to be connected, so that $d(x,y) \in \NN_0$
for all $x,y\in V$. Further, for $n \in \NN$ and $x \in V$, the symbol $c_x (n)$ denotes the number of self-avoiding walks with length $n$ starting in $x$, where we set $c_x (0) = 1$. A self-avoiding walk is a walk which never intersects itself. Let us moreover introduce two geometric assumptions which may hold or not.
\begin{assumption} \label{ass:spectral}
There exists $\alpha \in (0,1)$, such that for all $y \in V$
\begin{equation} \label{eq:spectral}
 \sum_{k \in V} \alpha^{d(k,y)} c_k(d(k,y)) < \infty .
\end{equation}
\end{assumption}
\begin{assumption} \label{ass:dyn}
There exists $\beta \in (0,1)$, such that for some $o \in V$, all $y \in V$ and any $p\geq0$
\begin{equation} \label{eq:dynamical}
\sum_{x \in V}
\sum_{k \in V} 
\lvert d(o,x) \rvert^p \Bigl(\beta^{d(x,k) + d(k,y)} c_{x}(d(x,k)) c_{k}(d(k,y))\Bigr)^{\frac{1}{2}} < \infty .
\end{equation}
\end{assumption}
\begin{definition}
If Assumption \ref{ass:spectral} is satisfied we set
\[
\alpha^* := \sup \bigl\{ \alpha \in (0,1) \colon \text{\eqref{eq:spectral} holds true for all $y \in V$}\bigr\} .
\] 
If Assumption \ref{ass:dyn} is satisfied we set 
\[
\beta^* := \sup \bigl\{ \beta \in (0,1) \colon \text{\eqref{eq:dynamical} holds true for some $o \in V$, all $y \in V$ and any $p > 0$} \bigr\}. 
\]
\end{definition}
The validity of Assumption \ref{ass:spectral} and \ref{ass:dyn} as well as quantitative estimates for the critical values $\alpha^*$ and $\beta^*$ are discussed in Remark \ref{remark:assumptions} at the end of this section.
\par
Let us introduce the Hilbert space $\ell^2 (V) := \{\psi : V \to \CC \mid \sum_{x \in V} |\psi (x)|^2 < \infty\}$ with inner product $\sprod{\psi}{\phi} := \sum_{x \in V} \overline{\psi(x)} \phi(x)$ and denote by $C_{\rm c} (V) \subset \ell^2 (V)$ the dense subset of functions $\psi : V \to \CC$ with finite support. On $\ell^2 (V)$, we define the discrete Laplace operator $\Delta_c : C_{\rm c} (V) \to \ell^2 (V)$, representing the kinetic energy term, by
\begin{equation*} 
 \bigl(\Delta_{\rm c} \psi \bigr)(x) := -m(x) \psi (x) + \sum_{y \sim x} \psi (y) .
\end{equation*}
Notice that $\Delta_{\rm c}$ is unbounded, iff there is no uniform bound on the vertex degree. However, it is known that $\Delta_{\rm c}$ is essentially selfadjoint, see e.\,g. \cite{Wojciechowski2007,Weber2010,Jorgensen2008}, or \cite{KellerL2011} for a proof in a more general framework. Therefore, $\Delta_{\rm c}$ has a unique selfadjoint extension, which we denote in the following by $\Delta : D(\Delta) \subset \ell^2 (V) \to \ell^2 (V)$.
The random potential term $V_\omega : \ell^2 (V) \to \ell^2 (V)$ is defined by
\[
 \bigl(V_\omega \psi\bigr) (x) := \omega_x \psi (x) .
\]
Here, we assume that $\{\omega_x\}_{x \in V}$ is given by a collection of independent identically distributed (i.\,i.\,d.) random variables, each distributed with the same density $\rho \in L^\infty(\RR) \cap L^1 (\RR)$. We assume that $\supp \rho = \{t \in \RR \colon \rho (t) \not = 0\}$ is a bounded set. The symbol $\mathbb{E}\{\cdot\}$ denotes the expectation with respect to the probability measure, i.\,e. $\mathbb{E} \{\cdot\} := \int_\Omega (\cdot) \prod_{k \in V} \rho(\omega_k) \drm \omega_k$ where $\Omega = \bigtimes_{k \in V} \RR$. For a set $\Gamma \subset V$, $\mathbb{E}_\Gamma\{\cdot \}$ denotes the expectation with respect to $\omega_k$, $k \in \Gamma$. That is, $\mathbb{E}_{\Gamma} \{\cdot\} := \int_{\Omega_\Gamma} (\cdot) \prod_{k \in \Gamma} \rho(\omega_k) \drm \omega_k$ where $\Omega_\Gamma := \bigtimes_{k \in \Gamma} \RR$. Since $\rho$ has finite support, $V_\omega$ is a bounded operator. We define the operator $H_\omega : D(\Delta) \to \ell^2 (V)$ for $\lambda > 0$ by
\begin{equation} \label{eq:hamiltonian}
 H_\omega := -\Delta + \lambda V_\omega .
\end{equation}
Since $\Delta$ is selfadjoint and $V_\omega$ is bounded and symmetric, it follows that $H_\omega$ is a selfadjoint operator. For the operator $H_\omega$ in Eq.~\eqref{eq:hamiltonian} and $z \in \CC \setminus \RR$ we denote the corresponding \emph{resolvent} by $G_\omega (z) := (H_\omega - z)^{-1}$. For the \emph{Green function}, which assigns to each  $(x,y) \in V \times V$
the corresponding matrix element of the resolvent, we use the notation
\begin{equation} \label{eq:greens}
G_\omega (z;x,y) := \sprod{\delta_x}{(H_\omega - z)^{-1}\delta_y}.
\end{equation}
Here, for $k \in V$, $\delta_k \in \ell^2 (V)$ denotes the Dirac function given by $\delta_k (k) = 1$ for $k \in V$ and $\delta_k (j) = 0$ for $j \in V \setminus \{k\}$. Notice that $\{\delta_k \colon k \in V\}$ is a complete orthonormal system for $\ell^2 (V)$.
\par
The quantity $\lambda / \Vert \rho \Vert_\infty^{-1}$ may be understood as a measure of the disorder present in the model. Our results are the following three theorems.
\begin{theorem} \label{theorem:result1}
 Let $s \in (0,1)$. Then we have for all $x,y \in V$ and all $z \in \CC \setminus \RR$
\[
 \EE \bigl\{ |G_\omega (z;x,y)|^s \bigr\} \leq 
 C' C^{d(x,y)} c_x(d(x,y)) ,
\]
where
\[
 C := \lambda^{-s} \lVert \rho \rVert_\infty^s \frac{2^s s^{-s}}{1-s} \quad \text{and} \quad C' := 2^{s+1} C .
\]

\end{theorem}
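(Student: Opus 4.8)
The plan is to prove the fractional moment bound by a standard two-step scheme: first establish an a priori single-site bound on $\EE\{|G_\omega(z;x,y)|^s\}$ that is uniform in $z$, then feed a resolvent (Dyson) expansion into this bound and sum over self-avoiding walks. The a priori bound is the analytic input and rests on the fact that, conditionally on all $\omega_k$ with $k\neq x$, the diagonal Green function $G_\omega(z;x,x)$ is a Herglotz function of $\omega_x$ (it has the form $(-\lambda^{-1})(a+\omega_x)^{-1}$ type dependence up to an affine change of variables, since changing $\omega_x$ shifts the operator by a rank-one perturbation $\lambda\omega_x\langle\delta_x,\cdot\rangle\delta_x$). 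Using the Krein/rank-one formula and the boundedness of $\rho$, one obtains the ``a priori'' estimate
\[
\EE_x\bigl\{|G_\omega(z;x,x)|^s\bigr\}\;\le\;\lambda^{-s}\lVert\rho\rVert_\infty^s\,\frac{2^s s^{-s}}{1-s}\;=\;C,
\]
valid for every $z\in\CC\setminus\RR$ and uniformly in the other randomness; more generally $\EE_x\{|G_\omega(z;x,y)|^s\}\le C$ for $y\neq x$ as well, because for $y\neq x$ the dependence of $G_\omega(z;x,y)$ on $\omega_x$ is of the form $(\text{const})/(a+\omega_x)$ and the same $L^s$-estimate of a Borel transform of a density applies. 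This is the step I expect to be the main obstacle, both in getting the constant exactly $2^s s^{-s}/(1-s)$ and in handling the unboundedness of $\Delta$ (one should note $H_\omega$ restricted appropriately, or just use that the resolvent identities are purely algebraic and valid for the selfadjoint extension).

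Next I would iterate the resolvent identity. Writing $H_\omega=-\Delta+\lambda V_\omega$ and comparing with the operator $\tilde H$ obtained by deleting the vertex $x$ (i.e. restricting to $V\setminus\{x\}$), the geometric resolvent formula gives, for $x\neq y$,
\[
G_\omega(z;x,y)\;=\;G_\omega(z;x,x)\sum_{x'\sim x} G_\omega^{(x)}(z;x',y),
\]
where $G_\omega^{(x)}$ is the Green function of $H_\omega$ with the site $x$ removed. Iterating this relation along the graph, each step strips off one vertex and produces a factor $G_\omega(z;\cdot,\cdot)$ on a diagonal entry together with a sum over neighbours; after $d(x,y)$ steps the remaining off-diagonal Green function connects a vertex to $y$. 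Crucially, because at each stage a vertex is removed, the walk $x=x_0\sim x_1\sim\cdots$ produced in the expansion is \emph{self-avoiding}, and it must have length at least $d(x,y)$ to reach $y$; truncating the expansion at length $d(x,y)$ and using the a priori bound for the final (still off-diagonal) factor gives exactly $c_x(d(x,y))$ terms, each a product of diagonal Green functions at distinct sites.

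Finally I would take expectations. The diagonal Green functions appearing in a fixed self-avoiding walk are evaluated at \emph{distinct} vertices, so one can condition successively: bound the innermost $\EE_{x_j}\{|G_\omega^{(\ldots)}(z;x_j,x_j)|^s\}\le C$ using the a priori estimate (the decoupling works because removing vertices only makes the operator ``more independent,'' and the a priori bound is insensitive to which finite set of other sites has been deleted), and repeat. A walk of length $d(x,y)$ contributes $d(x,y)$ diagonal factors plus one off-diagonal factor, yielding $C^{d(x,y)}\cdot C$; summing over the $c_x(d(x,y))$ self-avoiding walks gives $C^{d(x,y)+1}c_x(d(x,y))$. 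The extra combinatorial factors from the neighbour sums at the endpoints and from the inequality $|a+b|^s\le|a|^s+|b|^s$ used to split products are absorbed into the constant $C'=2^{s+1}C$. The bookkeeping of these constants, and a clean statement of the ``deleted-site'' a priori bound so that the conditioning argument is rigorous, are the only delicate points; the rest is the by-now classical Aizenman–Molchanov machinery transcribed to the locally finite graph setting.
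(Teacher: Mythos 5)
Your overall strategy coincides with the paper's: an a priori fractional-moment bound via rank-one spectral averaging, the vertex-deleted ("depleted") resolvent identity $G_\Gamma(z;x,y)=G_\Gamma(z;x,x)\sum_{k\sim x}G_{\Gamma\setminus\{x\}}(z;k,y)$, iteration exactly $d(x,y)$ times (which, since each step removes the current vertex, produces precisely the self-avoiding walks counted by $c_x(d(x,y))$), and one leftover factor estimated by an a priori bound.

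The one genuine gap is your claim that $\EE_{\{x\}}\bigl\{|G_\omega(z;x,y)|^s\bigr\}\le C$ for $y\ne x$ ``because the dependence on $\omega_x$ is of the form $\mathrm{const}/(a+\omega_x)$''. The dependence is indeed of that form, but the constant in the numerator is $G'(z;x,y)/\bigl(\lambda\,G'(z;x,x)\bigr)$, where $G'$ is the resolvent with the coupling at $x$ removed, and this ratio is not controlled by $\lambda^{-1}$: there is no pointwise bound of $|G'(z;x,y)|$ in terms of $|G'(z;x,x)|$, so averaging over $\omega_x$ alone does not produce a constant uniform in the remaining randomness, let alone the constant $C$. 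This is exactly why the paper (following Graf) uses the two-site representation $|G_\Gamma(z;x,y)|\le \lambda^{-1}|\omega_x-\gamma|^{-1}+\lambda^{-1}|\omega_y-\delta|^{-1}$, with $\gamma$ independent of $\omega_x$ and $\delta$ independent of $\omega_y$, and averages over \emph{both} $\omega_x$ and $\omega_y$; that two-site average is the true source of the constant $C'=2^{s+1}C$ attached to the single off-diagonal factor, not an absorbed combinatorial factor. The repair slots into your scheme without change: after the $d(x,y)$ iterations only the variables $\omega_{w(0)},\dots,\omega_{w(d(x,y)-1)}$ along the walk have been averaged, so $\omega_{w(l)}$ and $\omega_y$ are still free and the two-site bound applies to $G_{V\setminus\Lambda(w)}(z;w(l),y)$ (with the diagonal bound, which is even smaller, when $w(l)=y$). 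With that correction your argument is the paper's proof.
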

\begin{theorem} \label{theorem:result2}
 Let $s \in (0,1)$ and Assumption \ref{ass:spectral} be satisfied. Assume that 
\[
 \frac{\lambda}{\norm{\rho}_\infty} > \left(\frac{1}{\alpha^*} \cdot \frac{2^s s^{-s}}{1-s} \right)^{1/s} .
\]
Then $H_\omega$ exhibits almost surely only pure point spectrum, i.\,e. $\sigma_{\rm c} (H_\omega) = \emptyset$ almost surely.
\end{theorem}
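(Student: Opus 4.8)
The plan is to derive spectral localization from the fractional moment bound in Theorem~\ref{theorem:result1} by combining it with the Simon--Wolff type criterion. First I would observe that the hypothesis on $\lambda / \norm{\rho}_\infty$ is precisely the statement that $C < \alpha^*$, where $C = \lambda^{-s}\norm{\rho}_\infty^s \tfrac{2^s s^{-s}}{1-s}$ is the constant from Theorem~\ref{theorem:result1}; indeed, $\lambda/\norm{\rho}_\infty > (\tfrac{1}{\alpha^*}\cdot\tfrac{2^s s^{-s}}{1-s})^{1/s}$ is equivalent to $\lambda^s \norm{\rho}_\infty^{-s} > \tfrac{1}{\alpha^*}\cdot\tfrac{2^s s^{-s}}{1-s}$, i.e.\ $C < \alpha^*$. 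By definition of $\alpha^*$ as a supremum, we may then fix some $\alpha \in (0,1)$ with $C < \alpha < \alpha^*$ for which \eqref{eq:spectral} holds for all $y \in V$. Plugging $C < \alpha$ into Theorem~\ref{theorem:result1} gives, for all $x,y$ and all $z \in \CC\setminus\RR$, the uniform bound
\[
\EE\bigl\{|G_\omega(z;x,y)|^s\bigr\} \le C' \alpha^{d(x,y)} c_x(d(x,y)),
\]
whose right-hand side is summable in $x$ for each fixed $y$ by Assumption~\ref{ass:spectral} (note $c_x(d(x,y)) \le c_y(d(x,y))$ or one sums the other way; the symmetric roles are handled by \eqref{eq:spectral} holding for all $y$).

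Next I would convert this a priori bound into almost sure pure point spectrum. The standard route (following Aizenman--Molchanov, and Aizenman--Friedrich--Hundertmark--Simon) is: (i) use the boundedness of $\rho$ and a Wegner-type / spectral averaging argument to pass from the bound on $\EE\{|G_\omega(z;x,y)|^s\}$ for $z$ off the real axis to a bound that survives the limit $\im z \downarrow 0$, obtaining $\EE\{|G_\omega(E+\i0;x,y)|^s\} \le C'\alpha^{d(x,y)} c_x(d(x,y))$ for (Lebesgue-)a.e.\ energy $E$; (ii) invoke a Simon--Wolff criterion adapted to this graph setting: exponential (here, summable) decay of fractional moments of the Green function implies that for a.e.\ $E$ in the relevant spectral region, the operator has no absolutely continuous spectrum and, via rank-one perturbation theory in the variables $\omega_x$, the spectral measures are almost surely pure point. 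The key analytic input is that $\sum_{x} \EE\{|G_\omega(E+\i0;x,y)|^s\} < \infty$ uniformly on compact energy intervals, which forces $\sum_x |G_\omega(E+\i0;x,y)|^s < \infty$ almost surely and hence $G_\omega(E+\i0;x,y) \to 0$ as $d(x,y)\to\infty$; this decay of the Green function kernel is then fed into the eigenfunction expansion / RAGE machinery to conclude absence of continuous spectrum.

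Concretely, the cleanest implementation avoids re-proving the rank-one theory: one shows that the fractional moment bound implies, for a.e.\ $E$, that $\EE\{\,\sum_{x\in V}|G_\omega(E+\i0;x,o)|^{s}\,\} < \infty$ and then applies the criterion of Aizenman--Molchanov (as extended by Aizenman et al.) stating that finiteness of such sums on a full-measure set of energies inside an interval $I$ implies $\sigma_{\mathrm{c}}(H_\omega) \cap I = \emptyset$ almost surely. Since Theorem~\ref{theorem:result1} with $C < \alpha < \alpha^*$ gives this bound for \emph{all} $z\in\CC\setminus\RR$ with a constant independent of $z$ and of the energy, the interval $I$ can be taken to be all of $\RR$, yielding $\sigma_{\mathrm{c}}(H_\omega) = \emptyset$ almost surely. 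The locally-finite (as opposed to uniformly bounded degree) setting enters only through the need to control $c_x(d(x,y))$, but this is entirely absorbed into Assumption~\ref{ass:spectral}; the rank-one perturbation and spectral-averaging arguments use only that each $\omega_x$ has a bounded density and act diagonally.

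The main obstacle I anticipate is the passage to the limit $\im z \downarrow 0$ and the rigorous coupling of the fractional moment decay with the absence of continuous spectrum on a graph without translation structure. On $\ZZ^d$ one leans on homogeneity and on well-developed rank-one perturbation theory; here one must check that the relevant Simon--Wolff / Aizenman--Molchanov arguments are purely local and survive verbatim for an arbitrary locally finite graph, in particular that the a.e.-in-$E$ boundary values $G_\omega(E+\i0;x,y)$ exist and that spectral averaging over a single coordinate $\omega_x$ still controls the boundary Green function uniformly. I expect this to go through by citing the graph-independent formulation of these tools (e.g.\ from \cite{AizenmanFSH2001} or \cite{Hundertmark2000}), so the real work in the proof is bookkeeping: verifying the equivalence of the disorder hypothesis with $C<\alpha^*$, selecting $\alpha$, and checking summability via Assumption~\ref{ass:spectral}.
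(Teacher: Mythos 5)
Your reduction of the disorder hypothesis to $C<\alpha^*$, the choice of an admissible $\alpha\in(C,\alpha^*)$, and the resulting summability of $x\mapsto C^{d(x,y)}c_x(d(x,y))$ for fixed $y$ are all fine (the parenthetical inequality $c_x(d(x,y))\le c_y(d(x,y))$ is false in general, but it is also unnecessary: Assumption \ref{ass:spectral} is stated in exactly the form you need, $\sum_{k}\alpha^{d(k,y)}c_k(d(k,y))<\infty$ for every $y$). The problem is the second half of your plan, which is precisely the step the paper regards as the real work. The argument you actually sketch --- almost-sure finiteness of $\sum_x\abs{G_\omega(E+\i 0;x,y)}^s$ for a.e.\ $E$, hence pointwise decay of the boundary Green function, ``fed into the eigenfunction expansion / RAGE machinery'' --- is not a proof: decay of $G_\omega(E+\i 0;x,y)$ at (a.e.) fixed energies does not by itself exclude continuous spectrum. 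What a Simon--Wolff argument really needs is the second-moment condition $\sum_x\abs{G_\omega(E+\i 0;x,y)}^2=\int (\lambda-E)^{-2}\,\drm\mu_y(\lambda)<\infty$ for a.e.\ $E$, almost surely, and the bridge from fractional moments to this condition (for instance $\EE\{(\sum_x\abs{G_\omega(E+\i\epsilon;x,y)}^2)^{s/2}\}\le\sum_x\EE\{\abs{G_\omega(E+\i\epsilon;x,y)}^s\}$ uniformly in $\epsilon$, then monotone convergence as $\epsilon\searrow 0$, then rank-one spectral averaging in the single variable $\omega_y$) appears nowhere in your proposal. Moreover, the ``criterion of Aizenman--Molchanov (as extended by Aizenman et al.)'' that you propose to quote is stated and proved in \cite{AizenmanM1993,AizenmanFSH2001,Hundertmark2000} for $\ZZ^d$ or graphs with uniformly bounded degree; the paper explicitly flags that these conclusions cannot be cited verbatim here (unbounded Laplacian, no translation structure), so invoking them as a black box leaves the decisive step unproved. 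To be fair, the rank-one/Aronszajn--Donoghue machinery is abstract and does not use boundedness of the Laplacian, so your route (which is essentially the original Aizenman--Molchanov/Simon--Wolff path) could be completed --- but only after supplying the fractional-to-second-moment bridge and a correctly formulated Simon--Wolff statement, which is exactly what is missing.

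For contrast, the paper avoids boundary values and rank-one theory altogether. It first proves that $\abs{\im z}\,\EE\{\abs{G_\omega(z;x,y)}^2\}$ is controlled by the fractional moments (Lemma \ref{lemma:s=2}, Graf's argument), then characterizes the continuous subspace by Ruelle's criterion, and passes from the Ces\`aro time average to the resolvent by a new \emph{local} version of Graf's inequality (Lemma \ref{lemma:graf}, proved via the approximate-identity Lemma \ref{lemma:approx_ident}); Assumption \ref{ass:spectral} with $C<\alpha^*$ then makes the tail sum over $\{k: d(k,o)\ge R\}$ vanish, giving $\EE\{\norm{E_c P_I\delta_y}^2\}=0$. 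The two strategies are genuinely different; yours would trade the paper's Lemmas \ref{lemma:approx_ident}--\ref{lemma:graf} for the Simon--Wolff package, but as written it does not yet contain that package.
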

\begin{theorem} \label{theorem:result3}
Let $s \in (0,1)$ and Assumption \ref{ass:dyn} be satisfied. Assume that 
\[
 \frac{\lambda}{\norm{\rho}_\infty} > \left(\frac{1}{\beta^*} \cdot \frac{2^s s^{-s}}{1-s} \right)^{1/s} .
\]
Then we have for some $o \in V$, any $p \geq 0$, $\psi \in \ell^2 (V)$ of compact support and $a,b \in \RR$ with $a<b$ that
\[
 \sup_{t \in \RR} \Bigl\Vert \lvert X_o \rvert^p \euler^{-\i t H_\omega} P_{(a,b)} \psi\Bigr\Vert < \infty \quad \text{almost surely} .
\]
\end{theorem}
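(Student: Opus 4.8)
For $p=0$ the asserted bound reads $\sup_{t\in\RR}\norm{\euler^{-\i t H_\omega}P_{(a,b)}\psi}=\norm{P_{(a,b)}\psi}<\infty$, which is immediate from unitarity; so assume $p>0$. Since $\psi$ has compact support, $\psi=\sum_{y\in S}\psi(y)\delta_y$ for a finite $S\subset V$, and by the triangle inequality it suffices to prove, for each $y\in V$ and the base point $o$ of Assumption~\ref{ass:dyn},
\[
 \sup_{t\in\RR}\norm{\lvert X_o\rvert^p\,\euler^{-\i t H_\omega}P_{(a,b)}\delta_y}<\infty\qquad\text{almost surely},
\]
where $X_o$ is multiplication by $x\mapsto d(o,x)$. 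Expanding $\euler^{-\i t H_\omega}P_{(a,b)}\delta_y$ in the orthonormal basis $\{\delta_x\}_{x\in V}$, using $\norm{\cdot}_{\ell^2(V)}\le\norm{\cdot}_{\ell^1(V)}$, and noting that $\lvert\sprod{\delta_x}{\euler^{-\i t H_\omega}P_{(a,b)}\delta_y}\rvert\le Q_\omega(x,y)$ for every $t$ (apply the admissible test function $E\mapsto\euler^{-\i tE}\mathbf{1}_{(a,b)}(E)$), we obtain, pointwise in $\omega$,
\[
 \sup_{t\in\RR}\norm{\lvert X_o\rvert^p\,\euler^{-\i t H_\omega}P_{(a,b)}\delta_y}\ \le\ \sum_{x\in V}d(o,x)^p\,Q_\omega(x,y),
\]
with $Q_\omega(x,y):=\sup\bigl\{\lvert\sprod{\delta_x}{g(H_\omega)\delta_y}\rvert:\ g\ \text{Borel},\ \lvert g\rvert\le\mathbf{1}_{(a,b)}\bigr\}$ the eigenfunction correlator. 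Taking expectations and using monotone convergence, it suffices to show $\sum_{x\in V}d(o,x)^p\,\EE\{Q_\omega(x,y)\}<\infty$.

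The crux is an estimate of the form
\[
 \EE\{Q_\omega(x,y)\}\ \le\ C_1\sum_{k\in V}\bigl(\EE\{\lvert G_\omega(z_1;x,k)\rvert^{s}\}\bigr)^{1/2}\bigl(\EE\{\lvert G_\omega(z_2;k,y)\rvert^{s}\}\bigr)^{1/2}
\]
for all $x,y\in V$, with $C_1$ a finite constant depending only on $s$, $\rho$ and the interval $(a,b)$, uniformly in $z_1,z_2\in\CC\setminus\RR$ (taking boundary values $z_j\to E\pm\i 0$ where needed is legitimate since the right-hand side of Theorem~\ref{theorem:result1} is $z$-independent, so Fatou applies). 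To obtain it I would first express $Q_\omega(x,y)$ through the total variation on $(a,b)$ of the off-diagonal spectral measure $A\mapsto\sprod{\delta_x}{P_A\delta_y}$, hence through the resolvent boundary values $G_\omega(E\pm\i 0;x,y)$, which exist for almost every $E$. Next I would carry out an Aronszajn--Krein rank-one decomposition in the single coordinate $\omega_x$, together with the Aizenman--Molchanov decoupling inequality; this is where $\rho\in L^\infty$ with bounded support enters, producing the factor $\norm{\rho}_\infty$ and dominating the averaged correlator by the average of a \emph{product} of two resolvent matrix elements. Finally I would insert $\sum_{k\in V}\lvert\delta_k\rangle\langle\delta_k\rvert=\mathrm{Id}$ between the two factors and apply the Cauchy--Schwarz inequality in $L^2(\Omega)$ to split the averaged product into the product of the two square roots, which produces the sum over $k$. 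I expect this to be the main obstacle: when the vertex degrees of $G$ are not uniformly bounded one has to check that the decoupling constant and all combinatorial weights are controlled \emph{uniformly in the site}, using only the a~priori bound $Q_\omega\le 1$ and the finiteness of $\supp\rho$; this is precisely where Assumption~\ref{ass:dyn}, and not a crude degree bound, is needed for the resulting series to converge.

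Granting the displayed estimate, two applications of Theorem~\ref{theorem:result1} give $\EE\{\lvert G_\omega(z;x,k)\rvert^{s}\}\le C'C^{\,d(x,k)}c_x(d(x,k))$ and $\EE\{\lvert G_\omega(z;k,y)\rvert^{s}\}\le C'C^{\,d(k,y)}c_k(d(k,y))$ with $C=\lambda^{-s}\norm{\rho}_\infty^{s}\,\frac{2^s s^{-s}}{1-s}$, so that
\[
 \sum_{x\in V}d(o,x)^p\,\EE\{Q_\omega(x,y)\}\ \le\ C_1C'\sum_{x\in V}\sum_{k\in V}d(o,x)^p\Bigl(C^{\,d(x,k)+d(k,y)}\,c_x(d(x,k))\,c_k(d(k,y))\Bigr)^{1/2}.
\]
The hypothesis $\lambda/\norm{\rho}_\infty>\bigl(\tfrac{1}{\beta^*}\cdot\tfrac{2^s s^{-s}}{1-s}\bigr)^{1/s}$, raised to the power $s$, is exactly $C<\beta^*$; hence $\beta:=C$ satisfies \eqref{eq:dynamical} for this $o$, all $y\in V$ and the prescribed $p>0$, so the double sum on the right is finite. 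Consequently $\EE\bigl\{\sum_{x}d(o,x)^p Q_\omega(x,y)\bigr\}<\infty$, whence $\sum_{x}d(o,x)^p Q_\omega(x,y)<\infty$ almost surely; summing over the finitely many $y\in S$ finishes the proof.
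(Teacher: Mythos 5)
Your overall architecture matches the paper's: reduce to finitely many $y$ by compact support of $\psi$, bound the norm by the $\ell^1$-type sum $\sum_x d(o,x)^p\,\lvert\langle\delta_x,\euler^{-\i tH_\omega}P_{(a,b)}\delta_y\rangle\rvert$, take expectations, and conclude from the hypothesis (which, raised to the power $s$, is exactly $C<\beta^*$) together with Assumption~(B). The final summation step and the use of Theorem~\ref{theorem:result1} are fine. The problem is the ``crux'' estimate $\EE\{Q_\omega(x,y)\}\le C_1\sum_k(\EE\{|G_\omega(z_1;x,k)|^s\})^{1/2}(\EE\{|G_\omega(z_2;k,y)|^s\})^{1/2}$: you assert it but do not prove it, and the derivation you sketch does not produce it. First, in the regime you are proving (pure point spectrum in $(a,b)$) the off-diagonal spectral measure $A\mapsto\langle\delta_x,P_A\delta_y\rangle$ is purely atomic there, so its total variation on $(a,b)$ cannot be ``expressed through the resolvent boundary values $G_\omega(E\pm\i 0;x,y)$''; the boundary values only control the absolutely continuous part, which is precisely the part that vanishes. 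Second, if instead you keep $\epsilon>0$, insert $\sum_k\lvert\delta_k\rangle\langle\delta_k\rvert$ between two resolvents and apply Cauchy--Schwarz in $L^2(\Omega)$, you obtain \emph{second} moments $\EE\{|G_\omega(E\pm\i\epsilon;\cdot,\cdot)|^2\}$, not square roots of $s$-fractional moments; these second moments diverge like $\epsilon^{-1}$ as $\epsilon\searrow0$, so your appeal to Fatou ``since the right-hand side of Theorem~\ref{theorem:result1} is $z$-independent'' does not apply — Theorem~\ref{theorem:result1} controls moments of order $s<1$ only. The paper's proof survives exactly at this point because its variant of Stone's formula (Lemma~\ref{lemma:stolz} and Ineq.~\eqref{eq:stone2}) supplies the prefactor $\epsilon/\pi$, and Lemma~\ref{lemma:s=2} converts $\lvert\im z\rvert\,\EE\{|G_\omega(z;x,y)|^2\}$ into the fractional-moment bound; nothing in your sketch plays the role of either of these two ingredients.

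If you want to follow the eigenfunction-correlator route honestly, the known bound has the form $\EE\{Q_\Lambda(x,y;I)\}\le c_s\int_I\EE\{|G_\Lambda(E;x,y)|^s\}\,\drm E$ in \emph{finite} volume, with no intermediate $k$-sum, and one must then carry out a finite-volume exhaustion and a limiting argument (strong resolvent convergence, which needs care here because the Laplacian is unbounded) before summing over $x$; none of this is addressed in your sketch, and your rank-one/decoupling argument is not a substitute for it. As written, the key analytic step of the proof is missing, so the argument is incomplete.
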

Here, for an interval $I \subset \RR$, $P_I = P_I (H_\omega)$ denotes the spectral projection onto the Interval $I$ associated to the operator $H_\omega$. For $o \in V$ the operator $X_o : \ell^2 (V) \to \ell^2 (V)$ denotes the position operator given by $\bigl(X_o \psi\bigr)(x) = d(x,o) \psi(x)$.
\par
Theorem \ref{theorem:result1} concerns an estimate of an averaged fractional power of the Green function in terms of selfavoiding walks.  In the case of sufficiently large disorder, the constant $C$ is smaller than one. Thus, if the number of self-avoiding walks starting in $x$ do not increase ``too fast'' with their length, then Theorem \ref{theorem:result1} may be understood as an decay estimate for off-diagonal Green function elements. 
Theorem \ref{theorem:result1} is proven in Section \ref{sec:fmb}, while the boundedness of an averaged fractional power of the Green function is proven in Section \ref{sec:bounded}. The proofs are based on methods developed in \cite{AizenmanM1993,Graf1994,AizenmanFSH2001}. 
\par
Theorem \ref{theorem:result2} and Theorem \ref{theorem:result3} concern spectral and dynamical localization. They are proven in Section \ref{sec:spectral_loc}. While the conclusion from fractional moment bounds to localization is well developed for the lattice $\ZZ^d$, see e.\,g. \cite{AizenmanM1993,Graf1994,AizenmanFSH2001}, the proof of our results causes difficulties due to the fact that our Laplacian is unbounded. 
\begin{remark}
Notice that dynamical localization implies spectral localization by the RAGE-theorem, see e.\,g. \cite{Stolz2010}, but not vice versa as examples in \cite{RioJLS1996} show. From this fact and Theorem~\ref{theorem:result3} (on dynamical localization) it follows that under Assumption~\ref{ass:dyn} we have spectral localization in the large disorder regime. Under Assumption~\ref{ass:spectral}, however, we do not know whether dynamical localization holds true. Further, at present, we are not able to establish any relation between the conditions \ref{ass:spectral} and \ref{ass:dyn}. For this reason, we give a separate proof of spectral localization under Assumption~\ref{ass:spectral} in Section~\ref{sec:spectral_loc}.
\end{remark}
\begin{remark} \label{remark:assumptions}
In this remark we discuss the validity of Assumption \ref{ass:spectral} and \ref{ass:dyn}, and quantitative estimates of $\alpha^*$ and $\beta^*$. See Section \ref{sec:appendix} for more details and proofs. For $y \in V$ and $n \in \NN$ we denote by $S_y (n) = \{x \in V : d(x,y) = n\}$ the sphere of radius $n$ centered at $y$. For finite $\Gamma \subset V$, $\lvert \Gamma \rvert$ denotes the number of elements of $\Gamma$.
\begin{enumerate}[(i)]
 \item Assume that $G$ has uniformly bounded vertex degree, i.\,e. there is a $K \in \NN$ such that $m(x) \leq K$ for all $x \in V$. Then Assumptions \ref{ass:spectral} and \ref{ass:dyn} are fulfilled.
 \item There are also graphs with no uniform bound on the vertex degree which satisfy Assumption \ref{ass:spectral} and \ref{ass:dyn}. We give two examples.
The first one is a rooted tree where the number of offsprings $O(g)$ in generation $g \in \NN_0$ is given by $O(g) = \log_2 g$ if $\log_2 g \in \NN$ and $O(g) = 1$ else. The second example has vertex set $V = \ZZ^2$. Two vertices $x$ and $y$ are connected by an edge if their $\ell^1$-distance is one. Furthermore, for $n=3,4,5,\dots$, the vertex $(2^n,0)$ is connected to all vertices whose $\ell^1$ distance to $(2^n,0)$ equals $n$.
Obviously, these two graphs have no uniform bound on their vertex degree. However, both graphs obey Assumptions \ref{ass:spectral} and \ref{ass:dyn}.
\par
What matters that a graph $G = (V,E)$ satisfies Assumption \ref{ass:spectral} is that for each $y \in V$ there are $a , b > 0$ such that $c_y (n) \leq a \cdot b^n$ for all $n \in \NN$. If the constants $a$ and $b$ exist uniformly in $y \in V$ then the graph $G$ also satisfies Assumption \ref{ass:dyn}.
 \item Hammersley proved in \cite{Hammersley1957} for a class of graphs called \emph{crystals} that there exists a constant $\mu$, called the \emph{connective constant} of a graph, such that
\[
 \lim_{n \to \infty} c_x (n)^{\frac{1}{n}} = \mu \quad \text{for all $x \in V$}.
\]
Now assume that $G$ is a crystal. Then the following statements hold true:
\begin{enumerate}[(a)]
 \item Assume that for each vertex $y \in V$ there is a polynomial $p_y$ such that $\lvert S_y (n)\rvert$ $\leq p_y (n)$ for all $n \in \mathbb{N}$. Then Assumption \ref{ass:spectral} is satisfied and $\alpha^* = 1 / \mu$. 
 \item Assume there is a polynomial $p$, such that $\lvert S_y (n) \rvert \leq p(n)$ for all $n \in \mathbb{N}$ and $y \in V$. Then Assumption \ref{ass:dyn} is satisfied and $\beta^* = 1/\mu$.
 \item Assume that there are $a \in (0,\infty)$ and $b \in (1,\infty)$ such that $\lvert S_y (n) \rvert \leq ab^n$ for all $y \in V$ and $n \in \mathbb{N}$. Then Assumptions \ref{ass:spectral} and \ref{ass:dyn} are satisfied and $\alpha^* , \beta^* \geq 1/(b \mu)$. 
\end{enumerate}
In particular, for the lattice $\ZZ^d$ with standard edges we can replace $1/\alpha^*$ and $1/\beta^*$ in Theorem \ref{theorem:result2} and \ref{theorem:result3} by $\mu$. Notice that $\mu$ is typically smaller than $2d-1$, see \cite{Hammersley1963}. Hence, we have weakened the disorder assumption of Theorem \ref{theorem:aizenman}.
\end{enumerate}
\end{remark}
\begin{remark}
 Theorem \ref{theorem:result2} gives almost surely absence of continuous spectrum for the operator $H_\omega = -\Delta + \lambda V_\omega$ in the case of sufficiently large disorder whenever the graph $G$ satisfies Assumption \ref{ass:spectral}. However, depending on the graph $G$, even the graph Laplacian itself may have no continuous spectrum. Keller proves in \cite{Keller2010} the following theorem.
\begin{theorem}[\cite{Keller2010}] \label{theorem:keller}
 Let $G = (V,E)$ be infinite and $a_\infty > 0$. Then $\sigma_{\rm ess} (\Delta) = \emptyset$ if and only if $m_\infty = \infty$.
\end{theorem}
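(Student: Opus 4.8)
We outline a proof of Theorem~\ref{theorem:keller}. It is convenient to pass to the non-negative operator $-\Delta$, for which $\sigma_{\rm ess}(\Delta)=\emptyset$ is equivalent to $\inf\sigma_{\rm ess}(-\Delta)=+\infty$. The main tool is the Persson-type formula for the bottom of the essential spectrum: since $C_{\rm c}(V)$ is a form core for $-\Delta$ (which holds because $\Delta_{\rm c}$ is essentially self-adjoint on $C_{\rm c}(V)$, cf.\ Section~\ref{sec:model}), one has
\[
 \inf\sigma_{\rm ess}(-\Delta)=\sup_{\substack{K\subset V\\ \text{finite}}}\ \inf\bigl\{\sprod{\psi}{-\Delta\psi}:\ \psi\in C_{\rm c}(V),\ \supp\psi\cap K=\emptyset,\ \norm{\psi}=1\bigr\},
\]
together with the Dirichlet-energy identity $\sprod{\psi}{-\Delta\psi}=\sum_{x\in V}m(x)\abs{\psi(x)}^2-\sum_{x\sim y}\overline{\psi(x)}\psi(y)=\tfrac12\sum_{x\sim y}\abs{\psi(x)-\psi(y)}^2$. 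Hence $\sigma_{\rm ess}(\Delta)=\emptyset$ holds if and only if for every $N$ there is a finite $K\subset V$ with $\sprod{\psi}{-\Delta\psi}\ge N\norm{\psi}^2$ for all $\psi\in C_{\rm c}(V\setminus K)$.

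For the direction ``$\sigma_{\rm ess}(\Delta)=\emptyset\Rightarrow m_\infty=\infty$'' we would argue by contraposition. If $m_\infty<\infty$, choose a sequence $(x_n)$ of vertices eventually leaving every finite set with $M:=\sup_n m(x_n)<\infty$. The normalized vectors $\psi_n:=\delta_{x_n}$ then satisfy $\sprod{\psi_n}{-\Delta\psi_n}=m(x_n)\le M$ while $\psi_n\rightharpoonup 0$ weakly in $\ell^2(V)$, so the form domain of $-\Delta$ does not embed compactly into $\ell^2(V)$; consequently $(-\Delta+1)^{-1}$ is not compact and $\sigma_{\rm ess}(-\Delta)\neq\emptyset$ (equivalently, the Persson supremum above is $\le M$). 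This half uses neither the tessellation structure nor the hypothesis $a_\infty>0$.

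The converse ``$m_\infty=\infty\Rightarrow\sigma_{\rm ess}(\Delta)=\emptyset$'' is the substantial part. Write $m_K:=\inf_{x\notin K}m(x)$, so that $m_K\to\infty$ as $K$ exhausts $V$. For $\psi\in C_{\rm c}(V\setminus K)$ the diagonal term of the energy already gives $\sum_{x}m(x)\abs{\psi(x)}^2\ge m_K\norm{\psi}^2$, so everything hinges on showing that the off-diagonal term $\sum_{x\sim y}\overline{\psi(x)}\psi(y)$ cannot asymptotically cancel it. This is exactly where $a_\infty>0$ enters: for a general locally finite graph with $m_\infty=\infty$ the conclusion can fail (faces of unbounded size at infinity support low-energy functions even when all vertex degrees are large), and $a_\infty>0$ should be read as a non-degeneracy condition on the geometry at infinity — for a tessellation it keeps the face degrees uniformly bounded, which makes the vertex degree comparable to the combinatorial curvature $\kappa(x)=1-\tfrac12 m(x)+\sum_{f\ni x}\abs{\partial f}^{-1}$, so that $m(x)\to\infty$ becomes equivalent to $\kappa(x)\to-\infty$.

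The crux is then a combinatorial lower bound: there should exist a fixed finite set $K_0$ and a function $\eta$ with $\eta(d)\to\infty$ as $d\to\infty$ such that
\[
 \sprod{\psi}{-\Delta\psi}\ \ge\ \sum_{x\in V}\eta\bigl(m(x)\bigr)\,\abs{\psi(x)}^2\qquad\text{for all }\psi\in C_{\rm c}(V\setminus K_0).
\]
A natural route to such a bound is to regroup the Dirichlet energy $\tfrac12\sum_{x\sim y}\abs{\psi(x)-\psi(y)}^2$ into a sum over the faces of the tessellation (each edge lying in exactly two faces), to estimate each bounded-size face contribution from below by a discrete Cheeger/Gauss--Bonnet type inequality, and to sum. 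Granting it, for $K\supseteq K_0$ one obtains $\sprod{\psi}{-\Delta\psi}\ge\eta(m_K)\norm{\psi}^2$ on $C_{\rm c}(V\setminus K)$, and choosing $K$ with $\eta(m_K)\ge N$ finishes the proof via the Persson formula. The abstract reduction and the easy direction are routine; the step I expect to be the main obstacle is precisely this geometric lower bound, i.e.\ dominating the adjacency (off-diagonal) part of $-\Delta$ at infinity by the vertex degrees, which genuinely requires the tessellation structure encoded in the assumption $a_\infty>0$.
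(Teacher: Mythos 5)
First, a point of comparison: the paper does not prove this statement at all --- Theorem~\ref{theorem:keller} is quoted from \cite{Keller2010} --- so the only meaningful benchmark is Keller's (and Fujiwara's) actual argument. Within your proposal, the Persson-type reduction and the easy direction are fine: testing with the Weyl sequence $\delta_{x_n}$ along vertices leaving every finite set gives $\inf\sigma_{\rm ess}(-\Delta)\le m_\infty$, so empty essential spectrum forces $m_\infty=\infty$, and indeed this half needs neither $a_\infty>0$ nor any extra structure.

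The genuine gap is in the converse, and you flag it yourself with ``Granting it'': the weighted lower bound $\sprod{\psi}{-\Delta\psi}\ge\sum_{x}\eta\bigl(m(x)\bigr)\abs{\psi(x)}^2$ for $\psi$ supported outside a large finite set \emph{is} the substance of the theorem, and it is nowhere established. Moreover, the route you propose for it rests on a misreading of the hypothesis: $a_\infty$ is not a tessellation or face-size condition but the Cheeger (isoperimetric) constant at infinity of \cite{Fujiwara1996}, i.e.\ the limit over finite $K$ of the infimum, over finite $W\subset V\setminus K$, of the number of boundary edges of $W$ divided by $\sum_{x\in W}m(x)$. It is defined for arbitrary locally finite graphs; in the stated generality there are no faces, no planarity and no combinatorial curvature available, so the proposed regrouping of the Dirichlet energy over faces cannot even be formulated (Keller's curvature results for tessellations are a separate strand of \cite{Keller2010}). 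The actual argument is a Cheeger-type estimate at infinity: for finitely supported $\psi$ vanishing on $K$ one shows, via the co-area formula and Cauchy--Schwarz as in \cite{Fujiwara1996}, an inequality of the form $\sprod{\psi}{-\Delta\psi}\ge\bigl(1-\sqrt{1-a_K^2}\,\bigr)\,m_K\,\norm{\psi}^2$, with $a_K$ and $m_K$ the Cheeger constant and minimal vertex degree outside $K$; letting $K$ exhaust $V$ and combining with Persson and the delta-function upper bound yields $\bigl(1-\sqrt{1-a_\infty^2}\,\bigr)m_\infty\le\inf\sigma_{\rm ess}(-\Delta)\le m_\infty$, which is the asserted equivalence when $a_\infty>0$. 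To complete your proof you must replace the face/curvature heuristic by this isoperimetric inequality; as written, the central step is missing and the mechanism you suggest for it would fail outside the planar setting the theorem does not assume.
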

Here $m_\infty = \lim_{K \to \infty} m_K$ where $m_K = \inf\{m(v) \mid v \in V \setminus K\}$ for finite $K \subset V$, and $a_\infty$ is the Cheeger constant at infinity introduced in \cite{Fujiwara1996}. For the precise meaning of the limit $K \to \infty$ we refer to \cite{Keller2010}. Notice that the continuous spectrum of an operator is always contained in the essential spectrum.
\par
The assumptions of Theorem~\ref{theorem:keller} and Assumption~\ref{ass:spectral} are in some sense contrary. Assumption~\ref{ass:spectral} may be interpreted that the graph grows slowly, this justifies to think of the Laplacian (at least on finite sets) as a perturbation of $V_\omega$. The assumptions of Theorem~\ref{theorem:keller} may be interpreted that the graph grows rapidly. Notice that the examples in Remark~\ref{remark:assumptions} satisfy Assumption~\ref{ass:spectral}, but do not satisfy $m_\infty = \infty$. It is an open question if there are graphs satisfying Assumption~\ref{ass:spectral}, $a_\infty > 0$ and $m_\infty = \infty$.
\end{remark}
%
%
% %
% %%%%%%%%%%%%%%%%%%%%%%%%%%%%%%%%%%%%%%%%%%%%%%%%%%%%%%%%%%%%%%%%%%%%%%%%%%%%%%%%%%%
% %----------------------------------------------------------------------------------
% %         B O U N D E D N E S S
% %----------------------------------------------------------------------------------
% %%%%%%%%%%%%%%%%%%%%%%%%%%%%%%%%%%%%%%%%%%%%%%%%%%%%%%%%%%%%%%%%%%%%%%%%%%%%%%%%%%%
% %
% 
\section{Boundedness of Green's function} \label{sec:bounded}
To show the boundedness of averaged fractional powers of the Green function we use the method proposed in \cite{Graf1994} for the case $V = \ZZ^d$. The proof is independent of the underlying geometry and applies directly to Anderson models on locally finite graphs.
\par
Let $\Gamma \subset V$. All through this paper, we will assume that either $\Gamma$ is finite or $V \setminus \Gamma$ is finite (or the empty set). For $k \in \Gamma$ we define the Dirac function $\delta_k \in \ell^2 (\Gamma)$ by $\delta_k (k) = 1$ and $\delta_k (j) = 0$ for $j \in \Gamma \setminus \{k\}$. Let the operators $P_x : \ell^2 (\Gamma) \to \ell^2 (\Gamma)$ and $\Pro_{\Gamma} : \ell^2 (V) \to \ell^2 (\Gamma)$ be given by
\begin{equation} \label{gl:plambda}
P_x \psi = \psi(x) \delta_x \quad \text{and} \quad \Pro_{\Gamma} \psi := \sum_{k \in \Gamma} \psi (k) \delta_k .
\end{equation}
Note that the adjoint $(\Pro_{\Gamma})^* : \ell^2 (\Gamma) \to \ell^2 (V)$ is given by $(\Pro_{\Gamma})^* \phi = \sum_{k \in \Gamma} \phi (k) \delta_k$. On $\ell^2 (\Gamma)$, the restricted operators $\Delta_\Gamma$, $V_\Gamma$ and $H_\Gamma$ are formally given by
\[
\Delta_\Gamma := \Pro_\Gamma \Delta \Pro_\Gamma^\ast, \quad V_\Gamma := \Pro_\Gamma V_\omega \Pro_\Gamma^\ast \quad \text{and} \quad H_\Gamma := \Pro_\Gamma H_\omega \Pro_\Gamma^\ast = -\Delta_\Gamma + \lambda V_\Gamma .
\]
We say formally since we did not say anything about their domain. If $\Gamma$ is finite, the operators $\Delta_\Gamma$, $V_\Gamma$ and $H_\Gamma$ are bounded and thus selfadjoint on the domain $\ell^2 (\Gamma)$. Otherwise, if $V \setminus \Gamma$ is finite, $\Delta_\Gamma$ may be understood as a bounded perturbation of the Laplace operator of the induced subgraph $G[\Gamma]$, and is thus essentially selfadjoint on the domain $C_{\rm c} (\Gamma)$. Therefore, $\Delta_\Gamma : C_{\rm c}(\Gamma) \to \ell^2 (\Gamma)$ has a unique selfadjoint extension, which we again denote by $\Delta_\Gamma : D (\Delta_\Gamma) \to \ell^2 (\Gamma)$. Since $V_\Gamma$ is bounded, $H_\Gamma : D(\Delta_\Gamma) \to \ell^2 (\Gamma)$ is selfadjoint.
%\par
We denote the resolvent of $H_\Gamma$ by $G_\Gamma (z) := (H_\Gamma - z)^{-1}$ and the Green function by $G_\Gamma (z;x,y) := \bigl\langle \delta_x, G_\Gamma (z) \delta_y \bigr\rangle$ for $z \in \CC \setminus \RR$ and $x,y \in \Gamma$. 
\par
To prove the boundedness of an averaged fractional power of the Green function we use the fact that for $\Gamma \subset V$, all $x,y \in \Gamma$ and $z \in \CC \setminus \RR$ we have the representation
\begin{equation} \label{eq:green_rep}
G_\Gamma (z;x,x) = \frac{\lambda^{-1}}{\omega_x - \beta} \quad \text{and} \quad
\lvert G_\Gamma (z;x,y) \rvert \leq \frac{\lambda^{-1}}{\lvert \omega_x - \gamma \rvert} + \frac{\lambda^{-1}}{\lvert \omega_y - \delta \rvert},
\end{equation}
where $\beta$ and $\gamma$ do not depend on $\omega_x$ and $\delta$ is independent of $\omega_y$. This was shown in \cite{Graf1994} in the case $V = \ZZ^d$ and applies directly to our setting. A second fact is standard spectral averaging. More precisely, let $g:\RR \to \RR$ non-negative with $g \in L^{\infty}(\RR) \cap L^1(\RR)$ and $s \in (0,1)$. Then we have for all $\beta \in \CC$
\begin{equation} \label{lemma:average}
\int_\RR \abs{\xi - \beta}^{-s} g(\xi) \drm \xi \leq \norm{g}_\infty^s \norm{g}_{L^1}^{1-s} \frac{2^s s^{-s}}{1-s} ,
\end{equation}
see e.\,g. \cite{Graf1994} for a proof.
Equation \eqref{eq:green_rep} and Eq. \eqref{lemma:average} imply the following lemma on the boundednes of an averaged fractional power of the Green function.
\begin{lemma} \label{lemma:finitness2}
 Let $s \in (0,1)$ and $\Gamma \subset V$. Then we have for all $x,y \in \Gamma$ with $x \not = y$ and all $z \in \CC \setminus \RR$ the estimates
\begin{equation*}
 \mathbb{E}_{\{x\}} \Bigl\{ \bigl|G_\Gamma (z;x,x)\bigr|^s \Bigr\} \leq \lambda^{-s} \norm{\rho}_\infty^s \frac{2^s s^{-s}}{1-s} = C 
\end{equation*}
and
\begin{equation*}
 \mathbb{E}_{\{x,y\}} \Bigl\{ \bigl|G_\Gamma (z;x,y)\bigr|^s \Bigr\} \leq \lambda^{-s} \norm{\rho}_\infty^s 2^{s+1} \frac{2^s s^{-s}}{1-s} = C' . 
\end{equation*}
\end{lemma}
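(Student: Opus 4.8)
The plan is to combine the two facts recalled immediately above the statement: the pointwise resolvent representations \eqref{eq:green_rep} and the spectral averaging bound \eqref{lemma:average}, applied with the weight $g=\rho$. Since $\rho$ is a probability density we have $\norm{\rho}_{L^1}=1$, so \eqref{lemma:average} specializes to $\int_\RR \abs{\xi-\beta}^{-s}\rho(\xi)\,\drm\xi \le \norm{\rho}_\infty^s \frac{2^s s^{-s}}{1-s}$ for every $\beta\in\CC$; note this is finite even for real $\beta$ because $s<1$ and $\rho\in L^\infty$ has bounded support.

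For the diagonal bound I would use the first identity in \eqref{eq:green_rep}, in which $\beta$ is a fixed complex number not depending on $\omega_x$, and integrate out $\omega_x$ only:
\begin{equation*}
\EE_{\{x\}}\bigl\{\abs{G_\Gamma(z;x,x)}^s\bigr\}
= \lambda^{-s}\int_\RR \abs{\omega_x-\beta}^{-s}\rho(\omega_x)\,\drm\omega_x
\le \lambda^{-s}\norm{\rho}_\infty^s \frac{2^s s^{-s}}{1-s} = C ,
\end{equation*}
the inequality being \eqref{lemma:average} with $g=\rho$. The resulting bound is uniform in the remaining variables $(\omega_k)_{k\neq x}$, which is what the statement asserts.

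For the off-diagonal bound I would start from the second relation in \eqref{eq:green_rep} and apply the elementary inequality $(a+b)^s\le 2^s(a^s+b^s)$ for $a,b\ge0$ (plain subadditivity $(a+b)^s\le a^s+b^s$ would also work, yielding the slightly sharper constant $2C$), obtaining
\begin{equation*}
\abs{G_\Gamma(z;x,y)}^s \le 2^s\lambda^{-s}\abs{\omega_x-\gamma}^{-s} + 2^s\lambda^{-s}\abs{\omega_y-\delta}^{-s} .
\end{equation*}
Now take $\EE_{\{x,y\}}$ and invoke Fubini. The point is that $\gamma$ is independent of $\omega_x$, so in the first term I integrate over $\omega_x$ first with $\omega_y$ (hence $\gamma$) frozen, bound the inner integral by $\lambda^{-s}\norm{\rho}_\infty^s\frac{2^ss^{-s}}{1-s}$ via \eqref{lemma:average}, and then the remaining $\omega_y$-integration contributes only $\norm{\rho}_{L^1}=1$; symmetrically, since $\delta$ is independent of $\omega_y$, the second term is handled by integrating over $\omega_y$ first. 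This gives $\EE_{\{x,y\}}\{\abs{G_\Gamma(z;x,y)}^s\}\le 2^s(C+C)=2^{s+1}C=C'$.

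There is no real obstacle here: the geometric content and the two genuinely nontrivial analytic inputs are already provided by \eqref{eq:green_rep} and \eqref{lemma:average}. The only step needing a little care is the order of integration in the off-diagonal case — one must integrate out $\omega_x$ before $\omega_y$ in the term featuring $\abs{\omega_x-\gamma}^{-s}$, and conversely in the other term — which is precisely why \eqref{eq:green_rep} is phrased with $\gamma$ independent of $\omega_x$ and $\delta$ independent of $\omega_y$.
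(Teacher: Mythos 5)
Your argument is correct and is exactly the route the paper intends: it combines the representation \eqref{eq:green_rep} with the spectral averaging bound \eqref{lemma:average} (using $\norm{\rho}_{L^1}=1$), with the Fubini ordering handling the dependence structure of $\gamma$ and $\delta$, which the paper leaves implicit when it states that these two facts imply the lemma. Your observation that plain subadditivity would even yield the sharper constant $2C\le 2^{s+1}C=C'$ is a fine aside and does not affect agreement with the stated bound.
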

% 
% 
% %
% %%%%%%%%%%%%%%%%%%%%%%%%%%%%%%%%%%%%%%%%%%%%%%%%%%%%%%%%%%%%%%%%%%%%%%%%%%%%%%%%%%%
% %----------------------------------------------------------------------------------
% %         Green's function Estimates
% %----------------------------------------------------------------------------------
% %%%%%%%%%%%%%%%%%%%%%%%%%%%%%%%%%%%%%%%%%%%%%%%%%%%%%%%%%%%%%%%%%%%%%%%%%%%%%%%%%%%
% %
%  
\section{Moment bounds in terms of self-avoiding walks; proof of Theorem~\ref{theorem:result1}} \label{sec:fmb}
In this section we consider so called ``depleted'' Hamiltonians. Such Hamiltonians are obtained by setting to zero the operators ``hopping terms'' along a collection of bonds. More precisely, let $\Gamma \subset V$ be an arbitrary set and $\Lambda \subset \Gamma$ be a finite set. We define the depleted Laplace operator $\Delta_\Gamma^\Lambda : D(\Delta_\Gamma) \to \ell^2 (\Gamma)$ by
\begin{equation*} \label{eq:de1}
 \sprod{\delta_x}{\Delta_\Gamma^\Lambda \delta_y} := 
\begin{cases}
  0 & \text{if $x \in \Lambda$, $y \in \Gamma \setminus \Lambda$ or $y \in \Lambda$, $x \in \Gamma \setminus \Lambda$}, \\
  \sprod{\delta_x}{\Delta_\Gamma \delta_y} & \text{else}.
\end{cases} 
\end{equation*}
In other words, the hopping terms which connect $\Lambda$ with $\Gamma \setminus \Lambda$ or vice versa are deleted. The depleted Hamiltonian $H_\Gamma^\Lambda : D(\Delta_\Gamma) \to \ell^2 (\Gamma)$ is then defined by
\[
 H_\Gamma^\Lambda := -\Delta_\Gamma^\Lambda + V_\Gamma .
\]
Let further $T_\Gamma^\Lambda := \Delta_\Gamma - \Delta_\Gamma^\Lambda$ be the difference between the ``full'' Laplace operator and the depleted Laplace operator. Since $\Lambda$ is assumed to be finite, $T_\Gamma^\Lambda$ is a bounded operator and thus $\Delta_\Gamma^\Lambda$ and $H_\Gamma^\Lambda$ are selfadjoint operators.
Analogously to Eq. \eqref{eq:greens} we use the notation $G_\Gamma^\Lambda (z) = (H_\Gamma^\Lambda - z)^{-1}$ and $G_\Gamma^\Lambda (z;x,y) = \bigl \langle \delta_x^\Gamma, G_\Gamma^\Lambda(z) \delta_y^\Gamma \bigr \rangle$. The second resolvent identity yields for arbitrary sets $\Lambda \subset \Gamma \subset V$
\begin{align} 
 G_\Gamma (z) 	& = G_\Gamma^\Lambda (z) + G_\Gamma (z) T_\Gamma^\Lambda G_\Gamma^\Lambda (z) .
 \label{eq:resolvent} 
\end{align}
Notice that $G_\Gamma^\Lambda (z;x,y) = G_{\Gamma \setminus \Lambda} (z;x,y)$ for all $x,y \in \Gamma \setminus \Lambda$ and that $G_\Gamma^\Lambda (z;x,y) = 0$ if $x \in \Lambda$ and $y \not \in \Lambda$ or vice versa, since $H_\Gamma^\Lambda$ is block-diagonal with the two blocks $\Lambda$ and $\Gamma \setminus \Lambda$.
\begin{lemma}
 Let $\Gamma \subset V$ and $s \in (0,1)$. Then we have for all $x,y \in \Gamma$ with $x \not = y$
\begin{equation} \label{eq:iteration}
 \EE_{\{x\}} \bigl\{|G_\Gamma (z;x,y)|^s \bigr\} \leq \lambda^s \lVert \rho \rVert_\infty^s \frac{2^s s^{-s}}{1-s}
\sum_{\{k\in \Gamma : k \sim x\}} |G_{\Gamma \setminus \{x\}} (z;k,y)|^s .
\end{equation}
\end{lemma}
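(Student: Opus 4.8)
The plan is to apply the second resolvent identity \eqref{eq:resolvent} with the singleton deleted set $\Lambda = \{x\}$ and to extract from it a clean algebraic recursion for the Green function, after which \eqref{eq:iteration} follows from spectral averaging. First I would use that, since $x \neq y$, the vertices $x$ and $y$ lie in different blocks of the block‑diagonal operator $H_\Gamma^{\{x\}}$, so that $G_\Gamma^{\{x\}}(z;x,y) = 0$. Evaluating \eqref{eq:resolvent} at the pair $(x,y)$ then reduces to
\[
 G_\Gamma(z;x,y) = \bigl\langle \delta_x,\, G_\Gamma(z)\, T_\Gamma^{\{x\}}\, G_\Gamma^{\{x\}}(z)\, \delta_y \bigr\rangle .
\]

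Next I would make the finite‑rank operator $T_\Gamma^{\{x\}} = \Delta_\Gamma - \Delta_\Gamma^{\{x\}}$ explicit: by definition of the depleted Laplacian its only nonzero matrix elements are the hopping terms joining $x$ to its neighbours in $\Gamma$, each equal to $\langle \delta_x, \Delta_\Gamma \delta_k\rangle = 1$ (recall $G$ has no loops or multiple edges), so $T_\Gamma^{\{x\}} = \sum_{k \in \Gamma,\, k \sim x}\bigl( |\delta_x\rangle\langle\delta_k| + |\delta_k\rangle\langle\delta_x|\bigr)$, a finite sum because $G$ is locally finite. Applying this operator to $G_\Gamma^{\{x\}}(z)\delta_y$ and invoking $G_\Gamma^{\{x\}}(z;x,y) = 0$ once more, the terms proportional to $\delta_k$ vanish and there remains $T_\Gamma^{\{x\}} G_\Gamma^{\{x\}}(z)\delta_y = \delta_x\sum_{k\in\Gamma,\,k\sim x} G_\Gamma^{\{x\}}(z;k,y)$. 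Substituting this into the previous display and using $G_\Gamma^{\{x\}}(z;k,y) = G_{\Gamma\setminus\{x\}}(z;k,y)$ for $k,y\in\Gamma\setminus\{x\}$ yields the identity
\[
 G_\Gamma(z;x,y) = G_\Gamma(z;x,x)\sum_{k\in\Gamma,\, k\sim x} G_{\Gamma\setminus\{x\}}(z;k,y) ,
\]
which is the heart of the argument.

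From here the rest is routine. Since $s\in(0,1)$ the function $t\mapsto t^s$ is subadditive, so taking $s$‑th powers and distributing across the finite sum gives $\lvert G_\Gamma(z;x,y)\rvert^s \leq \lvert G_\Gamma(z;x,x)\rvert^s\sum_{k\in\Gamma,\,k\sim x}\lvert G_{\Gamma\setminus\{x\}}(z;k,y)\rvert^s$. Next I would apply the partial expectation $\EE_{\{x\}}$: the factors $G_{\Gamma\setminus\{x\}}(z;k,y)$ depend only on $\{\omega_j : j\in\Gamma\setminus\{x\}\}$ and hence are independent of $\omega_x$, so they may be pulled out of $\EE_{\{x\}}$, leaving $\EE_{\{x\}}\{\lvert G_\Gamma(z;x,x)\rvert^s\}$, which Lemma~\ref{lemma:finitness2} bounds by $C = \lambda^{-s}\lVert\rho\rVert_\infty^s\,\frac{2^s s^{-s}}{1-s}$ (this is exactly where the representation \eqref{eq:green_rep} together with the spectral averaging estimate \eqref{lemma:average} and $\lVert\rho\rVert_{L^1}=1$ enter). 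Collecting these estimates gives \eqref{eq:iteration}.

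I do not expect a serious obstacle here: the substance is the algebraic factorisation displayed above, and the only points demanding genuine care are the two appeals to $G_\Gamma^{\{x\}}(z;x,y) = 0$ — both immediate from the block decomposition of $H_\Gamma^{\{x\}}$ — and the independence of $G_{\Gamma\setminus\{x\}}(z;\cdot,\cdot)$ from $\omega_x$. Everything else is bookkeeping with the bounded finite‑rank perturbation $T_\Gamma^{\{x\}}$ and is insensitive to whether $\Gamma$ is finite or has finite complement.
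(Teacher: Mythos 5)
Your argument is correct and is essentially the paper's own proof: the second resolvent identity \eqref{eq:resolvent} with $\Lambda=\{x\}$, vanishing of $G_\Gamma^{\{x\}}(z;x,y)$ by block-diagonality, the factorisation $G_\Gamma(z;x,y)=G_\Gamma(z;x,x)\sum_{k\in\Gamma,\,k\sim x}G_{\Gamma\setminus\{x\}}(z;k,y)$, subadditivity of $t\mapsto t^s$, independence of $G_{\Gamma\setminus\{x\}}$ from $\omega_x$, and Lemma~\ref{lemma:finitness2}; you merely spell out $T_\Gamma^{\{x\}}$ more explicitly than the paper does. Note that your route (like Lemma~\ref{lemma:finitness2}) produces the prefactor $\lambda^{-s}\lVert\rho\rVert_\infty^s\frac{2^s s^{-s}}{1-s}$, so the $\lambda^{s}$ appearing in the printed statement \eqref{eq:iteration} is a typo which your proof silently corrects.
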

\begin{proof}
 By assumption $\{x,y\} \subset \Gamma$. Starting point of the proof is Eq. \eqref{eq:resolvent} with the choice $\Lambda := \{x\}$. Taking the matrix element $(x,y)$ gives
\begin{equation*}
 G_\Gamma (z;x,y) = G_\Gamma^\Lambda (z;x,y) + \bigl\langle \delta_x ,  G_\Gamma (z) T_\Gamma^\Lambda G_\Gamma^\Lambda (z) \delta_y \bigr\rangle .
\end{equation*}
The first summand on the right hand side of the above identity is zero, since $x \in \Lambda$ and $y \not \in \Lambda$. For the second term we calculate
\begin{align} \label{eq:sawrepresentation}
 G_\Gamma (z;x,y) &= \bigl\langle \delta_x ,  G_\Gamma (z) T_\Gamma^\Lambda G_\Gamma^\Lambda (z) \delta_y \bigr\rangle 
= G_\Gamma (z;x,x) \sum_{\{k \in \Gamma : k \sim x \}} G_{\Gamma \setminus \Lambda} (z;k,y) .
\end{align}
Since $G_{\Gamma \setminus \Lambda} (z)$ is independent of $\omega_x$, we obtain the statement of the lemma by taking absolute value to the power of $s$, averaging with respect to $\omega_x$ and using Lemma \ref{lemma:finitness2}.
\end{proof}
\begin{remark}
If one iterates Eq.~\eqref{eq:sawrepresentation} exactly $d(x,y)$ times, starting with $\Gamma = V$, one obtains a representation formula for the Green function in terms of selfavoiding walks.
\par
In \cite{Hundertmark2008} Hundertmark also establishes in the case $V = \ZZ^d$ a representation for the finite volume Green function in terms of selfavoiding walks. However, the proof of this representation formula uses the fact that the Laplacian is bounded, which is true on the lattice $\ZZ^d$ but not in our setting.
\end{remark}
\begin{proof}[Proof of Theorem \ref{theorem:result1}]
If $x = y$, we obtain the statement of the theorem by Lemma \ref{lemma:finitness2}. Assume $x \not = y$ and set $l = d(x,y)$. In order to estimate $\EE \{|G_\omega (z;x,y)|^s\}$ we iterate Eq. \eqref{eq:iteration} exactly $l$ times, starting with $\Gamma = V$, and obtain
\[
 \EE \bigl\{ |G_\omega (z;x,y)|^s \bigr\} \leq \left( \lambda^s \lVert \rho \rVert_\infty^s \frac{2^s s^{-s}}{1-s} \right)^{l} \sum_{w \in {\rm SAW}(x,l)} 
\EE \Bigl\{\bigl|G_{V \setminus \Lambda(w)} \bigl(z;w(l) , y\bigr)\bigr|^s \Bigr\} .
\]
Here, for $x \in V$ and $n \in \NN$, ${\rm SAW}(x,n)$ denotes the set of all self-avoiding walks in $G$ starting in $x$ with length $n$. For $w\in {\rm SAW}$, $w(i)$ denotes the vertices visited at step $i$, and for $w \in {\rm SAW}(x,n)$ we denote $\Lambda (w) = \{w(0),\dots,w(n-1)\}$.
 By Lemma \ref{lemma:finitness2}, all summands are bounded by $C'$, which gives the desired statement.
\end{proof}
For the proof of spectral and dynamical localization in the following sections, we use that the second moment of Green's function can be bounded in terms of its fractional moments, see Lemma \ref{lemma:s=2} below. This fact was established in \cite{Graf1994} in the case $V = \ZZ^d$.
Notice that our proof of Lemma \ref{lemma:s=2} is a slightly modified version of Graf's proof, allowing for densities $\rho$ with unbounded support. However, we require $\supp \rho$ bounded for different reasons, namely to settle the question of selfadjointness.
\begin{lemma} \label{lemma:s=2}
 Let $s \in (0,1)$ and $C,C'$ be the constant from Theorem \ref{theorem:result1}. Then we have for all $x,y \in V$ and all $z \in \CC \setminus \RR$
\[
 \abs{\im z} \EE \bigl\{ |G_\omega (z;x,y)|^2 \bigr\} \leq  \max\{1,\pi\norm{\rho}_\infty\} C' C^{d(x,y)} c_x(d(x,y)) .
\]
\end{lemma}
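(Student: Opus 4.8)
The plan is to bound the second moment $\EE\{|G_\omega(z;x,y)|^2\}$ by interpolating between a fractional moment of order $s$, which is controlled by Theorem~\ref{theorem:result1}, and a crude a priori bound on $|G_\omega(z;x,y)|$ in terms of $|\im z|^{-1}$. Concretely, writing $|G_\omega(z;x,y)|^2 = |G_\omega(z;x,y)|^{2-s}\,|G_\omega(z;x,y)|^s$ and using the trivial resolvent estimate $|G_\omega(z;x,y)| \leq \norm{(H_\omega - z)^{-1}} \leq 1/|\im z|$ on the factor with exponent $2-s$, we get
\begin{equation*}
 |\im z|\,\EE\bigl\{|G_\omega(z;x,y)|^2\bigr\} \leq |\im z|^{s-1}\,\EE\bigl\{|G_\omega(z;x,y)|^s\bigr\}.
\end{equation*}
This is already fine when $|\im z| \geq 1$, since then $|\im z|^{s-1}\le 1$ and Theorem~\ref{theorem:result1} finishes the job with the constant $C'C^{d(x,y)}c_x(d(x,y))$. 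The remaining, and genuinely delicate, case is $|\im z| < 1$, where the factor $|\im z|^{s-1}$ blows up and the naive interpolation is too lossy.

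For small $|\im z|$ I would instead exploit that the distribution of the diagonal resolvent entry has a density bounded in terms of $\norm{\rho}_\infty$, which yields an extra gain. The key observation, following Graf, is that for fixed randomness off the site $x$ the map $\omega_x \mapsto G_\omega(z;x,x)$ is (up to the deterministic shift and scaling appearing in Eq.~\eqref{eq:green_rep}) a Herglotz function of $\omega_x$, so that the law of $G_\omega(z;x,x)$ under $\omega_x$ has a density bounded by $\pi^{-1}\cdot$ (something controlled by $\norm{\rho}_\infty$); combined with the rank-one type formula this gives $\EE_{\{x\}}\{|G_\Gamma(z;x,x)|^2\}\cdot|\im z|$ bounded independently of $z$, via the identity $\int_\RR \frac{|\im z|}{|t-z|^2}\,\mathrm dt = \pi$. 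More generally, one shows the off-diagonal analogue: using Eq.~\eqref{eq:green_rep} together with $|\im z|\int |\xi-\beta|^{-2}g(\xi)\,\drm\xi \leq \pi\norm{g}_\infty$ (a second-moment companion of Eq.~\eqref{lemma:average}) one obtains
\begin{equation*}
 |\im z|\,\EE_{\{x,y\}}\bigl\{|G_\Gamma(z;x,y)|^2\bigr\} \leq \text{const}\cdot\norm{\rho}_\infty,
\end{equation*}
i.e. a bound of ``order $s=2$'' that mirrors Lemma~\ref{lemma:finitness2}. The point is that this lets us redo the interpolation with the $s=2$ quantity carrying the $|\im z|$ factor and the fractional moment carrying the decay in $d(x,y)$.

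So the structure I anticipate is: (1) record the one-site averaging bound $|\im z|\,\EE_{\{x,y\}}\{|G_\Gamma(z;x,y)|^2\}\le \pi\norm{\rho}_\infty\cdot(\text{combinatorial factor})$, proved exactly as Lemma~\ref{lemma:finitness2} but with the $L^2$ spectral-averaging estimate in place of Eq.~\eqref{lemma:average}; (2) iterate the resolvent expansion Eq.~\eqref{eq:iteration}/\eqref{eq:sawrepresentation} along self-avoiding walks from $x$ of length $l=d(x,y)$, now propagating a product of $l$ fractional-moment constants $C$ and closing the last step with the $L^2$ estimate from (1) instead of the $L^1$ one, which contributes the $|\im z|^{-1}$ and the $\pi\norm{\rho}_\infty$; this is legitimate because, as in the proof of Theorem~\ref{theorem:result1}, the resolvent $G_{V\setminus\Lambda(w)}(z)$ appearing after removing the visited sites is independent of the randomness being averaged; (3) combine with the $|\im z|\ge 1$ case via $\max\{1,\pi\norm{\rho}_\infty\}$ to get a single clean bound. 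I expect the main obstacle to be step (1)–(2) bookkeeping in our unbounded-Laplacian setting: one must be sure that all the operators $G_{\Gamma}$, $G_\Gamma^\Lambda$ are well-defined selfadjoint resolvents (this is already arranged in Sections~\ref{sec:bounded}–\ref{sec:fmb}), that the block-diagonal/rank-one structure of Eq.~\eqref{eq:green_rep} and Eq.~\eqref{eq:sawrepresentation} still holds verbatim, and — the genuinely new analytic input — that the $L^2$ spectral averaging inequality $|\im z|\int_\RR|\xi-\beta|^{-2}g(\xi)\,\drm\xi\le\pi\norm{g}_\infty$ holds for all $\beta\in\CC$ with $\im z\ne 0$; this last estimate is elementary (bound $g\le\norm{g}_\infty$ pointwise and use $\int_\RR\frac{|\im z|}{|\xi-\beta|^2}\,\drm\xi \le \int_\RR\frac{|\im z|}{(\xi-\re\beta)^2+(\im z)^2}\,\drm\xi=\pi$, noting $|\xi-\beta|^2\ge(\xi-\re\beta)^2+(\im z)^2$ when $\beta=\gamma$ with $\im\gamma$ having the sign making this worst-case, and in general absorbing the shift), so the real work is organizational rather than conceptual.
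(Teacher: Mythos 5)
There is a genuine gap in step (2), and it is the heart of the matter. Your plan is to run the self-avoiding-walk expansion ``propagating a product of $l$ fractional-moment constants $C$'' and to close the last step with an $L^2$ spectral-averaging bound. But the quantity you must control is $\EE\{|G_\omega(z;x,y)|^2\}$, and the expansion \eqref{eq:iteration}/\eqref{eq:sawrepresentation} only produces $s$-moment constants when the exponent being propagated is $s\in(0,1)$: the one-step identity $G_\Gamma(z;x,y)=G_\Gamma(z;x,x)\sum_{k\sim x}G_{\Gamma\setminus\{x\}}(z;k,y)$ is useful precisely because $|\sum_k a_k|^s\le\sum_k|a_k|^s$ and because $\EE_{\{x\}}\{|G_\Gamma(z;x,x)|^s\}\le C$ uniformly in $z$. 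If instead you carry the exponent $2$ through the expansion, the subadditivity fails (Cauchy--Schwarz costs a factor $m(x)$, which is unbounded in this setting), and each diagonal factor must then be averaged in $L^2$, which by \eqref{eq:green_rep} costs a factor of order $\norm{\rho}_\infty/(\lambda\,\lvert\im z\rvert)$ \emph{per step}, so after $l$ steps you accumulate $\lvert\im z\rvert^{-l}$ rather than the single $\lvert\im z\rvert^{-1}$ the lemma allows. There is no ``mixed-exponent'' version of the expansion in which the left-hand side is a second moment while the walk carries $s$-moments; nor can you recover it by interpolation, since H\"older only lets you bound $\EE\{|G|^2\}$ by moments of order $\ge 2$ together with the $s$-moment, and your a priori $L^2$ bound $\EE\{|G|^2\}\lesssim 1/\lvert\im z\rvert$ carries no spatial decay to transfer. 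The missing ingredient is exactly the paper's Lemma~\ref{lemma:sto2} combined with Graf's double-averaging trick: conditioning on all randomness except $\omega_x$, one proves the pointwise (in the remaining randomness) reduction $\lvert\im z\rvert\,\EE_{\{x\}}\{|G_\omega(z;x,y)|^2\}\le\max\{1,\pi\norm{\rho}_\infty\}\,\EE_{\{x\}}\{|G_\omega(z;x,y)|^s\}$, by splitting according to whether $|G_\omega(z;x,x)|\le 1$ (where the factor $\lvert\im G(z;x,x)\rvert/|G(z;x,x)|^s\le|G(z;x,x)|^{1-s}\le1$) or $\ge1$ (where one introduces the rank-one perturbation $H_\omega+\kappa P_x$, re-averages over $\kappa$ with density $\rho(\omega_x+\kappa)$, and uses $\int_\RR\lvert\im b\rvert/|t+b|^2\,\drm t=\pi$). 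Only after this reduction does one invoke Theorem~\ref{theorem:result1}; no second-moment expansion along walks is ever performed.

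A secondary problem is your step (1) itself: the $L^2$ spectral-averaging bound for the off-diagonal entries needs a lower bound on the imaginary parts of the poles $\gamma,\delta$ in \eqref{eq:green_rep}, and the inequality you invoke, $|\xi-\beta|^2\ge(\xi-\re\beta)^2+(\im z)^2$, is not justified as stated; what the Herglotz structure actually gives is $\lvert\im\gamma\rvert\ge\lvert\im z\rvert/\lambda$ (via $\lvert\im G(z;x,x)^{-1}\rvert\ge\lvert\im z\rvert$), so the correct constant involves $\lambda$ and must be derived, not assumed. This could be repaired, but it does not rescue step (2); as written the proposal does not prove the lemma, and the case distinction $\lvert\im z\rvert\ge1$ versus $\lvert\im z\rvert<1$ is unnecessary once the Graf reduction is in place.
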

For the proof of Lemma \ref{lemma:s=2} some preparatory estimates are required.
\begin{lemma} \label{lemma:sto2}
 Let $H$ be a selfadjoint operator on $\ell^2 (V)$, $s \in [0,2]$, $G(z) = (H-z)^{-1}$ and $G(z;x,y) = \langle \delta_x , G(z) \delta_y\rangle$. Then we have for all $x,y \in V$ and all $z \in \CC \setminus \RR$ the bounds
\begin{align*}
 \abs{\im z} \abs{G (z;x,y)}^2 &\leq \abs{\im G(z;x,x)} \frac{\abs{G(z;x,y)}^s}{\abs{G(z;x,x)}^s}
\intertext{and}
\abs{\im z} \frac{\abs{G(z;x,y)}^2}{\abs{G(z;x,x)}^2} &\leq \abs{\im G(z;x,x)^{-1}} \frac{\abs{G(z;x,y)}^s}{\abs{G(z;x,x)}^s} .
\end{align*}
\end{lemma}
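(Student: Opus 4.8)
The plan is to reduce both displayed inequalities to a single elementary estimate on the diagonal Green function. First I record the algebraic identity $\im(w^{-1}) = -\im(w)/\abs{w}^2$, valid for $w \in \CC \setminus \{0\}$; applied to $w = G(z;x,x)$ it gives $\abs{\im G(z;x,x)^{-1}} = \abs{\im G(z;x,x)}\,\abs{G(z;x,x)}^{-2}$. Hence, once we know that $G(z;x,x) \neq 0$, multiplying the second claimed inequality through by $\abs{G(z;x,x)}^2$ turns it into the first one, so it suffices to prove the first inequality.

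Next I establish the crude bound $\abs{\im z}\,\abs{G(z;x,y)}^2 \leq \abs{\im G(z;x,x)}$ for every $y \in V$. The key input is the operator identity $G(z)^{*}G(z) = G(z)G(z)^{*} = \frac{G(z) - G(z)^{*}}{z - \bar z}$, which follows from the first resolvent identity for the selfadjoint operator $H$ at the points $z$ and $\bar z$ and holds even when $H$ is unbounded, since $G(z)$ is bounded for $z \notin \RR$. Taking the $(\delta_x,\delta_x)$ matrix element of $G(z)G(z)^{*}$ yields $\norm{G(z)^{*}\delta_x}^2 = \sum_{y \in V}\abs{G(z;x,y)}^2$, while the matrix element of $\frac{G(z)-G(z)^{*}}{z-\bar z}$ equals $\im G(z;x,x)/\im z$. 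Since $G(z)^{*} = (H-\bar z)^{-1}$ is injective and $\delta_x \neq 0$, this common value is strictly positive; in particular $\im G(z;x,x) \neq 0$, so $G(z;x,x) \neq 0$ as needed above, and, retaining only the two terms $y$ and $x$ in the sum,
\[
 \frac{\abs{\im G(z;x,x)}}{\abs{\im z}} \;\geq\; \max\bigl\{\, \abs{G(z;x,x)}^2,\ \abs{G(z;x,y)}^2 \,\bigr\}.
\]

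Finally I bridge the gap between this bound and the asserted inequality by a real-variable argument. Writing $p := \abs{G(z;x,y)} \geq 0$, $q := \abs{G(z;x,x)} > 0$ and $A := \abs{\im G(z;x,x)}/\abs{\im z}$, we have $p^2 \leq A$ and $q^2 \leq A$, and since $s \in [0,2]$ and $p,q \geq 0$,
\[
 p^{2-s}q^{s} \;\leq\; \max\{p,q\}^{2-s}\,\max\{p,q\}^{s} \;=\; \max\{p^2,q^2\} \;\leq\; A .
\]
Multiplying by $p^{s}q^{-s}$ gives $\abs{\im z}\,p^2 \leq \abs{\im G(z;x,x)}\,p^{s}q^{-s}$, which is precisely the first inequality, hence also the second; the degenerate case $p = 0$ is immediate.

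There is no deep obstacle: the whole argument rests on the identity $G(z)^{*}G(z) = \im G(z)/\im z$ together with the estimate $p^{2-s}q^{s} \leq \max\{p,q\}^2$. The only points that require a little care are checking that $G(z;x,x)$ never vanishes, so that the quotients $\abs{G(z;x,y)}^s/\abs{G(z;x,x)}^s$ are well defined, and observing that the target inequality --- which at first glance looks stronger than the crude bound $\abs{\im z}\abs{G(z;x,y)}^2 \leq \abs{\im G(z;x,x)}$ --- is in fact a consequence of it, uniformly for $s$ in the stated range, via the displayed $\max$-trick.
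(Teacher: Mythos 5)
Your proof is correct, and it is complete where the paper is not: the paper does not prove Lemma \ref{lemma:sto2} but refers to \cite{Graf1994,AizenmanG1998}, adding only the remark that the second bound follows from the first via $\lvert \im w \rvert = \lvert \im w^{-1}\rvert\,\lvert w \rvert^2$ applied to $w = G(z;x,x)$ --- exactly the reduction you carry out. Your argument (the resolvent identity giving $\sum_{y}\lvert G(z;x,y)\rvert^2 = \im G(z;x,x)/\im z$, which also settles $G(z;x,x)\neq 0$, followed by the interpolation $p^{2-s}q^{s}\leq \max\{p^2,q^2\}$ for $s\in[0,2]$) is essentially the standard proof from the cited references, so there is nothing to object to.
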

For a proof of Lemma~\ref{lemma:sto2} we refer to \cite{Graf1994,AizenmanG1998}. Notice that the second statement of the lemma follows from the first statement of the lemma by using $\lvert \im z \rvert = \lvert \im z^{-1} \rvert \lvert z \rvert^2$ for $z \in \CC$ with $z \not = 0$.
\begin{proof}[Proof of Lemma \ref{lemma:s=2}]
 We start with two estimates for $\lvert \im z \rvert \EE_{\{x\}}\{|G_\omega (z;x,y)|^2\}$. We use the first estimate of Lemma \ref{lemma:sto2}, average with respect to $\omega_x$, estimate a part of the integrand by its maximum and obtain a \textit{first estimate}
\begin{equation*}
 \abs{\im z} \EE_{\{x\}} \bigl\{ \abs{G_\omega (z;x,y)}^2 \bigr\} \leq 
A(\omega) \EE_{\{x\}}\bigl\{\abs{G_\omega (z;x,y)}^s\bigr\} ,
\end{equation*}
where 
\[
 A(\omega) := \max_{\omega_x \in \RR} \left\lbrace \frac{\abs{\im G_\omega(z;x,x)}}{\abs{G_\omega (z;x,x)}^s} \right\rbrace .
\]
Now we consider for $x \in V$ and $\kappa \in \RR$ the operator $H_{\omega,\kappa} = H_\omega + \kappa P_x$. We denote $G_{\omega , \kappa} (z) = (H_{\omega , \kappa} - z)^{-1}$ and $G_{\omega , \kappa}(z;x,y) = \langle \delta_x , G_{\omega , \kappa}(z) \delta_y \rangle$ and obtain from the second resolvent identity
\[
 G_{\omega , \kappa} (z;x,y) = \frac{1}{\kappa + G_{\omega}(z;x,x)^{-1}} \frac{G_\omega (z;x,y)}{G_\omega (z;x,x)} .
\]
Due to the second estimate of Lemma \ref{lemma:sto2} we obtain
\[
 \abs{\im z} |G_{\omega , \kappa} (z;x,y)|^2 \leq \frac{|\im G_\omega (z;x,x)^{-1}|}{|\kappa + G_{\omega}(z;x,x)^{-1}|^2} \frac{|G_\omega (z;x,y)|^s}{|G_\omega (z;x,x)|^s} .
\]
We multiply both sides with $\rho(\omega_x + \kappa)\rho(\omega_x)$, integrate with respect to $\kappa$ and $\omega_x$ and obtain a \textit{second estimate}
\[
  \abs{\im z} \EE_{\{x\}} \bigl\{ \abs{G_\omega (z;x,y)}^2 \bigr\} \leq 
B (\omega)
\EE_{\{x\}}\bigl\{\abs{G_\omega (z;x,y)}^s\bigr\}
\]
with 
\[
 B (\omega) := \max_{\omega_x \in \RR} \left\lbrace \int_\RR \frac{|\im G_\omega (z;x,x)^{-1}|}{|\kappa + G_\omega (z;x,x)^{-1}|^2} \frac{\rho(\omega_x + \kappa)}{|G_\omega (z;x,x)|^s} \drm \kappa \right\rbrace .
\]
Here we have used that $\int_\RR |G_{\omega , \kappa}(z;x,y)|^2 \rho(\omega_x + \kappa) \drm \kappa = \int_\RR |G_{\omega}(z;x,y)|^2 \rho(\omega_x) \drm \omega_x$, which is easy to see by the substitution $\omega_x + \kappa = t$. Combining our two estimates we obtain
\[
  \abs{\im z} \EE_{\{x\}} \bigl\{ \abs{G_\omega (z;x,y)}^2 \bigr\} \leq  \min \bigl\{A(\omega) , B(\omega)\bigr\} \cdot \EE_{\{x\}}\bigl\{\abs{G_\omega (z;x,y)}^s\bigr\}.
\]
Let now $\Omega_A := \{\omega \in \Omega \colon \lvert G_\omega(z;x,x) \rvert \leq 1\}$ and $\Omega_B := \{\omega \in \Omega \colon \abs{G_\omega (z;x,x) } \geq 1\}$. Then we have for $\omega \in \Omega_A$ the estimate
\[
 A(\omega) \leq \max_{\omega_x \in \RR} \left\lbrace |G_\omega (z;x,x)|^{1-s} \right\rbrace  \leq 1 .
\]
Otherwise, for $\omega \in \Omega_B$ we have, since $\int_\RR |\im b|/|t + b|^2 \drm t = \pi$ for $\im b \not = 0$,
\begin{align*}
 B(\omega) & \leq \max_{\omega_x \in \RR} \left\lbrace \norm{\rho}_\infty |G_\omega (z;x,x)|^{-s} \int_\RR \frac{|\im G_\omega (z;x,x)^{-1}|}{|\kappa + G_\omega (z;x,x)^{-1}|^2} \drm \kappa \right\rbrace 
 \leq \norm{\rho}_\infty \pi .
\end{align*}
Thus we have for all $x,y \in V$ (and all $\omega \in \Omega$)
\[
 \abs{\im z} \EE_{\{x\}} \bigl\{ \abs{G_\omega (z;x,y)}^2 \bigr\} \leq  \max \bigl\{1 , \norm{\rho}_\infty \pi \bigr\} \cdot \EE_{\{x\}}\bigl\{\abs{G_\omega (z;x,y)}^s\bigr\}.
\]
If we take full expectation, the desired result follows from Theorem \ref{theorem:result1}.
\end{proof}
%
%
% 
% 
%%%%%%%%%%%%%%%%%%%%%%%%%%%%%%%%%%%%%%%%%%%%%%%%%%%%%%%%%%%%%%%
% S P E C T R A L     L O C A L I Z A T I O N
%%%%%%%%%%%%%%%%%%%%%%%%%%%%%%%%%%%%%%%%%%%%%%%%%%%%%%%%%%%%%%%
%
% 
%
\section{Localization; proof of Theorem \ref{theorem:result2} and \ref{theorem:result3}} \label{sec:spectral_loc}
\subsection{Spectral localization; proof of Theorem \ref{theorem:result2}}
In this subsection we prove Theorem \ref{theorem:result2}, i.\,e. that $H_\omega$ exhibits almost surely only pure point spectrum. To this end we will use a local version of \cite[Eq. (2)]{Graf1994} established in Lemma~\ref{lemma:graf}. While the non-local version (i.\,e. \cite[Eq. (2)]{Graf1994}) can be proven in an elegant way using Parseval's identity, the proof of the local version is not a straightforward generalization, i. e. the idea with Parseval's identity cannot be applied. To our best knowledge there is no proof for the local version in the literature.
For the proof of Lemma \ref{lemma:graf} we shall need
\begin{lemma} \label{lemma:approx_ident}
 Let $f :\RR \to \CC$ be a bounded and piecewise continuous function. Then we have for all $a \in \RR$
\[
 I = \lim_{\epsilon \searrow 0} \frac{\epsilon}{\pi} \int_\RR \frac{f(E)}{(a - E)^2 + \epsilon^2} \drm E = \frac{1}{2} \left(  f (a-) + f (a+)\right)  ,
\]
where $f (a-) = \lim_{\lambda \nearrow a} f(\lambda)$ and $f (a+) = \lim_{\lambda \searrow a} f(\lambda)$.
\end{lemma}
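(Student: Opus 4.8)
The plan is to recognize the kernel $\tfrac{\epsilon}{\pi}\cdot\tfrac{1}{(a-E)^2+\epsilon^2}$ as the Poisson kernel (equivalently, the imaginary part of the Cauchy kernel $\tfrac{1}{\pi}\im\tfrac{1}{a-E+\i\epsilon}$), which is a classical approximate identity: it is nonnegative, integrates to $1$ over $\RR$ for every $\epsilon>0$ (by the substitution $E = a + \epsilon\tan\theta$), and concentrates its mass near $E=a$ as $\epsilon\searrow 0$. The only subtlety compared with the textbook statement is that $f$ need not be continuous at $a$, so one cannot simply invoke "approximate identity convolved with a continuous function converges pointwise to the function"; instead one gets the average of the one-sided limits. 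I expect this splitting-at-the-jump to be the main (very mild) obstacle.

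First I would write $1 = \tfrac{\epsilon}{\pi}\int_\RR \tfrac{\drm E}{(a-E)^2+\epsilon^2}$ and split the integral at $E=a$ into the contributions from $(-\infty,a)$ and $(a,\infty)$; by the substitution $E \mapsto 2a - E$ each half contributes exactly $\tfrac12$. Hence
\[
 I_\epsilon - \frac{1}{2}\bigl(f(a-)+f(a+)\bigr)
 = \frac{\epsilon}{\pi}\int_{-\infty}^a \frac{f(E)-f(a-)}{(a-E)^2+\epsilon^2}\,\drm E
 + \frac{\epsilon}{\pi}\int_a^{\infty} \frac{f(E)-f(a+)}{(a-E)^2+\epsilon^2}\,\drm E .
\]
It now suffices to show each of these two terms tends to $0$; I treat the second, the first being symmetric. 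Fix $\eta>0$. Since $f(a+) = \lim_{\lambda\searrow a} f(\lambda)$ exists, choose $\delta>0$ so that $|f(E)-f(a+)|<\eta$ for $E\in(a,a+\delta)$. On $(a,a+\delta)$ the integrand is bounded in absolute value by $\eta\cdot\tfrac{\epsilon}{\pi}\tfrac{1}{(a-E)^2+\epsilon^2}$, whose integral over all of $(a,\infty)$ is at most $\tfrac12$, giving a contribution $\le \eta/2$. On $(a+\delta,\infty)$ we use that $f$ is bounded, say $|f|\le M$, so $|f(E)-f(a+)|\le 2M$, and there $(a-E)^2+\epsilon^2 \ge \delta^2$, so this piece is bounded by $\tfrac{\epsilon}{\pi}\cdot 2M\int_{a+\delta}^\infty \delta^{-2}\,\drm E$ — this last bound is too crude since the tail integral diverges, so instead I keep the kernel and estimate $\tfrac{\epsilon}{\pi}\int_{a+\delta}^\infty \tfrac{2M}{(a-E)^2+\epsilon^2}\,\drm E \le \tfrac{2M\epsilon}{\pi}\int_{\delta}^\infty \tfrac{\drm u}{u^2} = \tfrac{2M\epsilon}{\pi\delta}$, which $\to 0$ as $\epsilon\searrow 0$. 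Choosing $\epsilon$ small enough that this is $<\eta/2$ makes the second term $<\eta$; since $\eta>0$ was arbitrary, it tends to $0$. The same argument applies to the first term, and combining the two establishes the claimed limit. (Piecewise continuity of $f$ enters only to guarantee the one-sided limits $f(a\pm)$ exist and that the integrals are well defined; boundedness is what controls the far tail.)

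The main obstacle, such as it is, is purely bookkeeping: handling the jump by subtracting the appropriate one-sided limit on each half-line rather than a single value, and making sure the tail estimate uses the decay $\tfrac1{u^2}$ of the Poisson kernel rather than a naive uniform bound on $1/\delta^2$. No analytic input beyond the explicit value of $\int_\RR \tfrac{\epsilon}{\pi}\tfrac{\drm E}{(a-E)^2+\epsilon^2}=1$ is needed.
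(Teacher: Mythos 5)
Your proposal is correct and follows essentially the same route as the paper's proof: subtract the appropriate one-sided limit on each half-line (using that each half of the Poisson kernel carries mass $\tfrac12$), split at $a\pm\delta$, control the near pieces by the small oscillation coming from the existence of $f(a\pm)$, and kill the far pieces using the boundedness of $f$ and the $O(\epsilon/\delta)$ tail of the kernel. The only differences are cosmetic (your $u^{-2}$ tail estimate versus the paper's arctan computation), so nothing further is needed.
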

Representation theorems of this type are well known in different settings. For example, if $f \in L^1 (\RR)$ such a statement (with $f(a)$ on the right hand side) has been proven for almost all $a \in \RR$, e.\,g., in \cite{Bear1979}.
\begin{proof}[Proof of Lemma \ref{lemma:approx_ident}]
Fix $a \in \RR$ and assume that $f$ is not continuous in $a$. The case where $f$ is continuous in $a$ is similar but easier. Let $\beta > 0$ be small enough, such that there is no further  discontinuity point in $[a-\beta , a+\beta]$.
We have the identity
\begin{align*}
J &= I - \frac{1}{2} \bigl( f (a-) + f(a+) \bigr ) \\
  &= \lim_{\epsilon \searrow 0} \frac{\epsilon}{\pi} \left( 
\int_{-\infty}^{a} \frac{f(E)- f (a-)}{(a - E)^2 + \epsilon^2} \drm E  +  
\int_{a}^{\infty} \frac{f(E)- f (a+)}{(a - E)^2 + \epsilon^2} \drm E 
\right) .
\end{align*}
For $\delta \in (0,\beta)$ we decompose the integration according to
\begin{multline*}
J =  \lim_{\epsilon \searrow 0} \frac{\epsilon}{\pi}  \left(   \left( 
\int_{-\infty}^{a-\delta} + \int_{a-\delta}^{a} \right) \frac{f(E)- f (a-)}{(a - E)^2 + \epsilon^2} \drm E \right.  \\ + \left. \left( 
\int_{a}^{a+\delta} + \int_{a+\delta}^{\infty} \right) \frac{f(E)- f (a+)}{(a - E)^2 + \epsilon^2} \drm E    \right) .
\end{multline*}
We denote by $I_1,I_2,I_3,I_4$ the four above integrals, in the order in which they occur. 
Since $f$ is bounded, we have for arbitrary $\delta \in (0,\beta)$
\begin{align*}
 \lvert I_1 \rvert &\leq \lVert f \rVert_\infty \int_{-\infty}^{a-\delta} \frac{1}{(a - E)^2 + \epsilon^2} \drm E + \lvert f(a-)\rvert \int_{-\infty}^{a-\delta} \frac{1}{(a - E)^2 + \epsilon^2} \drm E \\[1ex]
& = \left(\lVert f \rVert_\infty +  \lvert f(a-)\rvert \right) \frac{1}{\epsilon} \left( \frac{\pi}{2} - \arctan \left(\frac{\delta}{\epsilon} \right) \right) ,
\end{align*}
which gives $\lim_{\epsilon \searrow 0} \frac{\epsilon}{\pi} |I_1| = 0$, and similarly $\lim_{\epsilon \searrow 0} \frac{\epsilon}{\pi} |I_4| = 0$ for all $\delta \in (0,\beta)$.
For $I_3$ we estimate
\begin{align*}
 \lvert I_3  \rvert &= \left | \int_{a}^{a+\delta} \frac{f(E) - f (a+)}{(a - E)^2 + \epsilon^2} \drm E \right | \leq \sup_{E\in (a,a+\delta]} \bigr\{ |f(E) - f (a+)| \bigl\} \frac{1}{\epsilon} \arctan \left( \frac{\delta}{\epsilon} \right) .
\end{align*}
An analogue estimate holds for $I_2$. Summarizing, we have for arbitrary $\delta \in (0,\beta)$ that
\begin{align*}
 \lvert J \rvert &\leq \sup_{E\in (a,a+\delta]} \bigr\{ |f(E) - f (a+)| \bigl\} + \sup_{E\in [a-\delta,a)} \bigr\{ |f(E) - f (a-)| \bigl\}.
\end{align*}
Since $\delta \in (0,\beta)$ was arbitrary and $f (a+)$ ($f(a-)$) is the right-hand limit (left-hand limit) of $f$ in $a$, we obtain $|J| = 0$, which proves the statement of the lemma.
\end{proof}
\begin{lemma} \label{lemma:graf}
 Let $H$ be a selfadjoint operator on $\ell^2 (V)$, $P : \ell^2 (V) \to \ell^2 (V)$ be an orthogonal projection, $a,b \in \RR$ with $a<b$ and $P_{(a,b)} : \ell^2 (V) \to \ell^2 (V)$ be the spectral projection onto the interval $[a,b]$ associated to the operator $H$. Then,
\begin{equation} \label{eq:graf}
 \lim_{\epsilon \searrow 0} 2 \epsilon \int_0^\infty \euler^{-2\epsilon s} \norm{P \euler^{-\i H s} P_{(a,b)} \psi}^2 \drm s
 \leq \liminf_{\epsilon \searrow 0} \frac{\epsilon}{\pi}\int_a^b \norm{P (H-E-\i \epsilon)^{-1} \psi}^2 \drm E .
\end{equation}
\end{lemma}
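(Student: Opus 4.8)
The plan is to express both sides in terms of the spectral measures of $H$ and compare. Let $\mu_{\psi}$ denote the (vector-valued, but we may work coordinatewise) spectral measure associated to $H$ and $\psi$, and more precisely for the operator $P$ write, for each $x \in V$, the complex measure $\nu_x(\cdot) := \langle \delta_x, P P_{(\cdot)} \psi \rangle$ — actually it is cleaner to introduce, for fixed $x$, the finite positive measure $d\mu_x(E)$ coming from the polarization of $\langle P E_H(\cdot) \psi, P E_H(\cdot)\psi\rangle$ in the single spectral variable, since $P$ commutes with nothing in particular; the point is that $\|P e^{-\i H s} P_{(a,b)}\psi\|^2 = \sum_x |\langle \delta_x, P e^{-\i Hs} P_{(a,b)}\psi\rangle|^2$ and each inner product is the Fourier transform $\int_a^b e^{-\i E s} \, dg_x(E)$ of a finite complex measure $g_x$ on $[a,b]$ (namely $dg_x(E) = \langle \delta_x, P \, dE_H(E)\, \psi\rangle$). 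First I would interchange the sum over $x$ with the $s$-integral (monotone convergence / Tonelli, everything nonnegative) so that it suffices to prove the inequality for each fixed $x$ with $\|P e^{-\i Hs}P_{(a,b)}\psi\|^2$ replaced by $|\int_a^b e^{-\i E s}\,dg_x(E)|^2$ and the right-hand side replaced by $\int_a^b \bigl|\tfrac{dg_x}{dE}\text{-type object}\bigr|$ — but $g_x$ need not be absolutely continuous, so I must be careful: the right side of \eqref{eq:graf} is $\liminf \frac{\epsilon}{\pi}\int_a^b \sum_x |\langle \delta_x, P(H-E-\i\epsilon)^{-1}\psi\rangle|^2\,dE$, and $\langle \delta_x, P(H-E-\i\epsilon)^{-1}\psi\rangle = \int_\RR \frac{1}{\lambda - E - \i\epsilon}\,dg_x^{\rm full}(\lambda)$ where $g_x^{\rm full}$ is the analogous measure over all of $\RR$ (not just $[a,b]$). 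So the structure is: LHS involves only the part of the spectral measure in $(a,b)$, RHS involves the Poisson-type smoothing of the full measure restricted to the window $[a,b]$.

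The key computation is the left-hand side. Using $\int_a^b e^{-\i Es}\,dg_x(E)$ and expanding the square, $2\epsilon \int_0^\infty e^{-2\epsilon s} \bigl|\int_a^b e^{-\i Es}\,dg_x(E)\bigr|^2\,ds = 2\epsilon \int_0^\infty e^{-2\epsilon s}\int_a^b\int_a^b e^{-\i(E-E')s}\,dg_x(E)\,d\overline{g_x}(E')\,ds$; doing the $s$-integral gives the kernel $\frac{2\epsilon}{2\epsilon + \i(E-E')} $, and taking real part (the expression is real) yields $\frac{(2\epsilon)^2}{(2\epsilon)^2 + (E-E')^2}$, i.e.
\[
2\epsilon \int_0^\infty e^{-2\epsilon s}\Bigl|\int_a^b e^{-\i Es}\,dg_x(E)\Bigr|^2\,ds = \int_a^b\int_a^b \frac{(2\epsilon)^2}{(2\epsilon)^2+(E-E')^2}\,dg_x(E)\,d\overline{g_x}(E').
\]
As $\epsilon \searrow 0$, Lemma~\ref{lemma:approx_ident} (applied in the inner variable, with the approximate identity $\frac{2\epsilon/\pi}{(2\epsilon)^2 + (E-E')^2}$ and noting $2\epsilon \cdot \frac{\pi}{2\epsilon} $ bookkeeping; more precisely the mass $\int \frac{(2\epsilon)^2}{(2\epsilon)^2+t^2}dt = 2\pi\epsilon$, so this is $2\epsilon$ times an approximate identity) forces the limit to concentrate on the diagonal $E = E'$, giving $\lim_{\epsilon\searrow 0}(\text{LHS for } x) = \int_{(a,b)} d|g_x|^2_{\rm diag}$ — the ``diagonal'' part, which by the theory of such double-integral limits equals $\sum_{E\in(a,b)} |g_x(\{E\})|^2$, the sum of squared atoms (the continuous part contributes nothing in the limit because the diagonal is $|g_x\otimes \overline{g_x}|$-null for the non-atomic part). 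Here I must handle the open interval versus $[a,b]$ carefully using the $\frac12(f(a-)+f(a+))$ form of Lemma~\ref{lemma:approx_ident}; this is exactly why that lemma was stated with one-sided limits.

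For the right-hand side I use the elementary lower bound: for the Poisson kernel, $\frac{\epsilon}{\pi}|\int_\RR \frac{dg_x^{\rm full}(\lambda)}{\lambda - E - \i\epsilon}|^2 \ge \frac{\epsilon}{\pi}|\int_{(a,b)}\frac{dg_x(\lambda)}{\lambda-E-\i\epsilon}|^2$ is \emph{not} generally true (cross terms), so instead I restrict attention to the atoms: each atom $\lambda_0 \in (a,b)$ of $g_x$ with mass $c = g_x(\{\lambda_0\})$ contributes to $\int_a^b \frac{\epsilon}{\pi}|\cdots|^2 dE$ at least, after pulling out that atom and using $\liminf$ of $\frac{\epsilon}{\pi}\int_a^b \frac{|c|^2}{(\lambda_0-E)^2+\epsilon^2}dE = |c|^2$ (by the same $\arctan$ computation as in the proof of Lemma~\ref{lemma:approx_ident}, since $\lambda_0$ is interior), and a diagonalization/Fatou argument to sum over countably many atoms and absorb the cross terms and the continuous part, which only help (one shows the cross contributions vanish in the $\liminf$ or can be bounded below by their limit which is a diagonal-only expression). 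Thus $\text{RHS} \ge \sum_x \sum_{\lambda_0 \in (a,b)} |g_x(\{\lambda_0\})|^2 = \text{LHS}$. The main obstacle is this last step — controlling the cross terms and the absolutely/singularly-continuous part of $g_x^{\rm full}$ in the $\liminf$ on the right, and justifying the interchange of the (infinite) sum over $x \in V$ with all the limits; I expect to handle it by first proving the inequality with finitely many $x$ and a single atom, then invoking monotone convergence in $x$ and Fatou in $\epsilon$, using throughout that $\psi$ has compact support only insofar as it makes the spectral measures finite (which they are anyway). A clean alternative for the right side: since $\frac{\epsilon}{\pi}\,\mathrm{Im}\,\langle\psi,(H-E-\i\epsilon)^{-1}\psi\rangle\,dE \to $ spectral measure weakly, one can lower-bound $\|P(H-E-\i\epsilon)^{-1}\psi\|^2$ and integrate, but the coordinatewise atom-extraction above is the most transparent route and is what I would write up.
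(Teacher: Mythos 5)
Your overall blueprint coincides with the paper's: expand both sides coordinatewise in $x$, use the spectral theorem to write each matrix element against the complex measure $\drm g_x(\lambda)=\langle\delta_x,P\,\drm E_\lambda\psi\rangle$, carry out the $s$-integral (resp.\ the $E$-integral) to obtain an explicit kernel, and identify the $\epsilon\searrow0$ limit as the diagonal (purely atomic) contribution, with Lemma~\ref{lemma:approx_ident} supplying the endpoint weights. Your left-hand side computation is essentially the paper's: the kernel $2\epsilon/(2\epsilon+\i(E-E'))$ is bounded by $1$ and tends pointwise to the indicator of the diagonal, so dominated convergence (after polarizing $g_x$ into finitely many finite positive measures) yields the sum of squared atoms in $(a,b)$; a small remark is that you need not take real parts at all — apply dominated convergence to the complex kernel directly, which avoids the slightly illegitimate move of pushing $\re$ inside a double integral against a complex product measure.

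The genuine gap is on the right-hand side, and you flag it yourself as "the main obstacle". The assertion that the cross terms "only help" is false as stated: splitting $F_\epsilon(E)=F_{\rm atoms}(E)+F_{\rm rest}(E)$, the cross contribution $2\re\bigl(F_{\rm atoms}\overline{F_{\rm rest}}\bigr)$ has no sign, so atom extraction plus positivity does not close the argument, and your fallback "one shows the cross contributions vanish in the $\liminf$" is precisely the nontrivial content left unproven. The paper's proof settles it by treating the right-hand side exactly as the left: write $\frac{\epsilon}{\pi}\int_a^b\bigl\lvert\langle\delta_x,P(H-E-\i\epsilon)^{-1}\psi\rangle\bigr\rvert^2\drm E$ as a double spectral integral against $f_\epsilon(\lambda,\mu)=\frac{\epsilon}{\pi}\int_a^b\frac{\drm E}{(\lambda-E+\i\epsilon)(\mu-E-\i\epsilon)}$, prove the uniform bound $\lvert f_\epsilon\rvert\le 2$ (Cauchy--Schwarz in $E$) together with the pointwise limits $f_\epsilon(\lambda,\mu)\to0$ for $\lambda\ne\mu$ and $f_\epsilon(\lambda,\lambda)\to\frac12\bigl(\chi_{[a,b]}(\lambda)+\chi_{(a,b)}(\lambda)\bigr)$ — this is where Lemma~\ref{lemma:approx_ident} with its one-sided limits is really used — and then apply dominated convergence; the off-diagonal (cross) terms and the continuous part of $g_x^{\rm full}$ disappear automatically, leaving a diagonal atomic mass with weight $\ge\chi_{(a,b)}$, which dominates your left-hand limit, and Fatou in the sum over $x$ (which on the right goes in the harmless direction) finishes. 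Until you carry out this step, or an equivalent proof that the cross and continuous contributions vanish in the $\liminf$, the proposal is a plan rather than a proof; note also that your reduction "it suffices to prove the inequality for each fixed $x$" silently interchanges $\lim_{\epsilon\searrow0}$ with $\sum_x$ on the left, which needs a word of justification as well.
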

\begin{proof}
 We denote the left hand side of Ineq. \eqref{eq:graf} by $L$ and the right hand side by $R$, respectively. For the right hand side $R$ we calculate by using the monotone convergence theorem and Fatou's lemma
 \begin{equation*}
 R \geq \sum_{k \in V} \liminf_{\epsilon \searrow 0}  \frac{\epsilon}{\pi}  \int_a^b  \sprod{P (H - E - \i \epsilon)^{-1} \psi}{\delta_k} \sprod{\delta_k}{P (H - E - \i \epsilon)^{-1} \psi} \drm E .
\end{equation*}
By the spectral theorem, polarization and Fubini's theorem we have
\begin{align*}
 R &\geq \sum_{k \in V} \liminf_{\epsilon \searrow 0}  \frac{\epsilon}{\pi}  \int_a^b \left(  \int_\RR \int_\RR \frac{\drm \langle P E_\lambda \psi , \delta_k \rangle \drm \langle \delta_k , P E_\mu \psi \rangle}{(\lambda - E + \i \epsilon)(\mu - E - \i \epsilon)}  \right) \drm E \\[1ex]
& =\sum_{k \in V} \liminf_{\epsilon \searrow 0}      \int_\RR \int_\RR f_\epsilon (\lambda , \mu) \drm \langle P E_\lambda \psi , \delta_k \rangle \drm \langle \delta_k , P E_\mu \psi \rangle ,
\end{align*}
where
\[
 f_\epsilon (\lambda , \mu) = \frac{\epsilon}{\pi}  \int_a^b \frac{\drm E }{(\lambda - E + \i \epsilon)(\mu - E - \i \epsilon)} .
\]
From Lemma \ref{lemma:approx_ident} we infer that $\lim_{\epsilon \searrow 0} f_\epsilon (\lambda , \lambda) = 1/2 (\chi_{[a,b]} (\lambda) + \chi_{(a,b)} (\lambda))$ for all $\lambda \in \RR$. For $\lambda \not = \mu$ one calculates that $\lim_{\epsilon \searrow 0} f_\epsilon (\lambda , \mu) = 0$. We also find that for all $\epsilon > 0$ and $\lambda , \mu \in \RR$
\begin{align*}
 \lvert f_\epsilon (\lambda, \mu) \rvert \leq \frac{\epsilon}{\pi}  \int_a^b \frac{\drm E }{\lvert (\lambda - E + \i \epsilon)(\mu - E - \i \epsilon) \rvert} \leq 2 .
\end{align*}
Recall that the limes inferior is the limit along a subsequence. By polarization and the dominated convergence theorem we obtain 
\begin{equation*}
 R \geq  \sum_{k \in V}  \int_\RR \int_\RR \delta_{\lambda \mu} \frac{1}{2} \left( \chi_{[a,b]} (\lambda) + \chi_{(a,b)} (\lambda) \right) \drm \langle P E_\lambda \psi , \delta_k \rangle \drm \langle \delta_k , P E_\mu \psi \rangle .
\end{equation*}
Here $\delta_{\lambda \mu}$ denotes the Kronecker function given by $\delta_{\lambda \lambda} = 1$ and $\delta_{\lambda \mu} = 0$ for $\lambda \not = \mu$. Since the both measures are complex conjugate to each other and the integrand is zero for $\lambda \not = \mu$, we obtain (using polar decomposition of the measures, see e.\,g. \cite{Rudin1987}),
\begin{equation*}
 R \geq \sum_{k \in V} \int_\RR \int_\RR \delta_{\lambda \mu} \chi_{(a,b)} (\lambda) \,\, \drm \langle P E_\lambda \psi , \delta_k \rangle \,\, \drm \langle \delta_k , P E_\mu \psi \rangle .
\end{equation*}
Similarly we calculate for the left hand side $L$ using the monotone convergence theorem and Fatou's lemma
\begin{align*}
 L &\leq \sum_{k \in V} \limsup_{\epsilon \searrow 0} 2 \epsilon \int_0^\infty \euler^{-2\epsilon s}  \Bigl| \sprod{P\euler^{-\i Hs} P_{(a,b)}\psi}{\delta_k}\Bigr|^2 \drm s .
\end{align*}
The spectral theorem, polarization and Fubini's theorem gives
\begin{align*}
L &\leq \sum_{k \in V} \limsup_{\epsilon \searrow 0} 2 \epsilon \int_0^\infty \left( \int_\RR \int_\RR \frac{\chi_{(a,b)}(\lambda) \chi_{(a,b)} (\mu)}{\euler^{(2\epsilon-\i\lambda+\i\mu) s}}  \, \drm \langle P E_\lambda  \psi , \delta_k \rangle \, \drm \langle \delta_k , P E_\mu  \psi \rangle \right) \drm s  \\[1ex]
&= \sum_{k \in V} \limsup_{\epsilon \searrow 0}  \int_{\RR}  \int_{\RR}  h_\epsilon (\lambda , \mu) \drm \langle P E_\lambda  \psi , \delta_k \rangle   \drm \langle \delta_k , P E_\mu  \psi \rangle ,
\end{align*}
where
\[
 h_\epsilon (\lambda , \mu) = 2 \epsilon \frac{2 \epsilon + \i (\lambda - \mu)}{4 \epsilon^2 + (\lambda - \mu)^2} \chi_{(a,b)}(\lambda)\chi_{(a,b)}(\mu) .
\]
Obviously, $\lim_{\epsilon \searrow 0} h_\epsilon (\lambda , \mu) = \delta_{\lambda \mu} \chi_{(a,b)} (\lambda)$ for all $\lambda,\mu \in \RR$.
Since $\lvert h_\epsilon (\lambda , \mu) \rvert \leq 1$ for all $\epsilon > 0$ and all $\lambda,\mu \in \RR$, we obtain by polarization and the dominated convergence theorem
\[
 L \leq \sum_{k \in V} \int_{\RR}  \int_{\RR} \delta_{\lambda \mu} \chi_{(a,b)} (\lambda) \drm \langle P E_\lambda  \psi , \delta_k \rangle   \drm \langle \delta_k , P E_\mu  \psi \rangle ,
\]
which gives $L \leq R$.
\end{proof}
\begin{proof}[Proof of Theorem \ref{theorem:result2} (spectral localization)]
Let $\mathcal{H}_p = \mathcal{H}_p (H_\omega)$ and $\mathcal{H}_c = \mathcal{H}_c (H_\omega)$ be the pure point and the continuous subspace of $\ell^2 (V)$ with respect to $H_\omega$. Let further $E_c = E_c (H_\omega)$ be the orthogonal projection onto $\mathcal{H}_c$. Let $(a,b) =: I \subset \RR$ be an arbitrary bounded interval. We will show that for almost all $\omega \in \Omega$, $P_I \delta_k \in \mathcal{H}_p$ for all $k \in V$. This implies that the spectrum in $(a,b)$ is almost surely only of pure point type. Since a countable intersection of sets with full measure has full measure, we can conclude that $H_\omega$ has almost surely no continuous spectrum.
\par
Fix a vertex $o \in V$. The criterion of Ruelle characterizes states from the subspace $\mathcal{H}_c$ as such states, which leave all compact sets in time mean \cite{Ruelle1969,CyconFKS1987}. More precisely, for all $\phi \in \ell^2 (V)$ we have
\begin{equation} \label{eq:ruelle}
 \norm{E_c \phi}^2 = \lim_{R \to \infty} \lim_{t \to \infty} \frac{1}{t} \int_0^t \norm{P_{R,o} {\rm e}^{-{\rm i}sH_\omega} \phi}^2 \drm s ,
\end{equation}
where $P_{R,o} : \ell^2 (V) \to \ell^2 (V)$ denotes the projection on states which vanish in $\{k \in V : d(k,o) < R\}$, i.\,e.
\[
\bigl(P_{R,o} \psi \bigr) (k) = \begin{cases}
                                 0 & \text{if $d(k,o)<R$,}\\
				 \psi (k) & \text{else.}
                                \end{cases}
\]
For non-negative functions $f$ we have the inequality \cite{FroehlichS1983}
 \[
\frac{1}{t} \int_0^t f(s) \drm s = 2\epsilon \int_0^{\frac{1}{2\epsilon}} f(s) \drm s \leq 2 \epsilon \int_0^{\frac{1}{2\epsilon}} e^{1-2\epsilon s}f(s) \drm s \leq  2 \epsilon \int_0^{\infty} e^{1-2\epsilon s}f(s) \drm s .
\]
We thus obtain from Eq. \eqref{eq:ruelle} and Lemma \ref{lemma:graf} for arbitrary $\psi \in \ell^2 (V)$
 \begin{align*}
 \norm{E_c P_I \psi}^2 &\leq \lim_{R \to \infty} \lim_{\epsilon \searrow 0} 2\epsilon \int_0^\infty \euler^{1-2\epsilon s} \norm{P_{R,o} {\rm e}^{-{\rm i}sH_\omega} P_I \psi}^2 \drm s \\
&\leq \lim_{R \to \infty} \liminf_{\epsilon \searrow 0} \frac{\euler \epsilon}{\pi}  \int_I \norm{P_{R,o} (H_\omega - E - \i\epsilon)^{-1} \psi}^2 \drm E .
\end{align*}
We choose $\psi = \delta_y$ with $y \in V$, take expectation, use Parseval's identity, Fatou's lemma and the monotone convergence theorem, and obtain for all $y \in V$
\begin{align*}
 \EE{\norm{E_{c} P_I \delta_y}^2} &\leq \mathbb{E}\Biggl\{\lim_{R \to \infty} \liminf_{\epsilon \searrow 0} \frac{\epsilon \euler}{\pi} \int_I  \sum_{\{k \in V : d(o,k) \geq R\}} \abs{G_\omega (E+\i \epsilon ; k,y)}^2 \drm E\Biggr\} \\[1ex]
& \leq \liminf_{R \to \infty} \liminf_{\epsilon \searrow 0} \frac{\epsilon \euler}{\pi} \int_I  \sum_{\{k \in V : d(o,k) \geq R\}} \mathbb{E} \Bigl\{\abs{G_\omega (E+\i \epsilon ; k,y)}^2\Bigr\} \drm E .
\end{align*}
By Lemma \ref{lemma:s=2} we have $\abs{\im \epsilon} \EE \bigl\{ \lvert G_\omega (E + \i \epsilon;k,y) \rvert^2 \bigr\} \leq  \max\{1,\pi\norm{\rho}_\infty\} C' C^{d(k,y)}$ $c_k(d($ $k,y))$ for all $k,y \in V$. Thus we have for all $y \in V$
\begin{align*}
\EE{\norm{E_{c} P_I \delta_y}^2} &\leq 
\frac{\max\{1 , \pi \norm{\rho}_\infty \}}{\pi (C' \abs{I} \euler)^{-1}}  \liminf_{R \to \infty}  \sum_{\{k \in V : d(k,o) \geq R\}} C^{d(k,y)} c_k(d(k,y)) .
\end{align*}
Since Assumption \ref{ass:spectral} is satisfied and $C < \alpha^*$ by assumption, the last sum converges. Hence, the limes inferior equals zero which gives $\mathbb{E} \bigl\{ \Vert E_{c} P_I \delta_y \Vert ^2 \bigr\} = 0$ for all $y \in V$. We conclude for all $y \in V$ that $P_I \delta_y \in \mathcal{H}_p$ for almost all $\omega \in \Omega$. This in turn gives the statement of the theorem.
\end{proof}
%
% %
% %
% %%%%%%%%%%%%%%%%%%%%%%%%%%%%%%%%%%%%%%%%%%%%%%%%%%%%%%%%%%%%%%%%%%%%%%%%%%%%%%%%%%%
% %----------------------------------------------------------------------------------
% %         D Y N A M I C A L     L O C A L I Z A T I O N
% %----------------------------------------------------------------------------------
% %%%%%%%%%%%%%%%%%%%%%%%%%%%%%%%%%%%%%%%%%%%%%%%%%%%%%%%%%%%%%%%%%%%%%%%%%%%%%%%%%%%
% %
% % 
%
\subsection{Dynamical localization; proof of Theorem \ref{theorem:result3}} \label{sec:dyn_loc}
In this subsection we prove Theorem \ref{theorem:result3} on dynamical localization. For this purpose we will follow the line of \cite{HamzaJS2009,Stolz2010} and use a variant of Stone's formula formulated in Lemma \ref{lemma:stolz}. The relevance of certain variants of Stone's formula for proving dynamical localization is well known, see e.\,g. \cite{AizenmanG1998,HamzaJS2009}. However, in our setting it is interesting that a factor one half has to be taken into account. Notice that Eq. \eqref{eq:stolz} is no longer valid if the left hand side is replaced by $\langle \psi , f(H) P_{[a,b]} \phi \rangle$ or $\langle \psi , f(H) P_{(a,b)} \phi \rangle$.
\begin{lemma} \label{lemma:stolz}
Let $H$ be a selfadjoint operator on a Hilbert space $\mathcal{H}$, $P_I = P_I (H)$ be the spectral projection onto the interval $I \subset \RR$ associated to the operator $H$. Let further $f : \RR \to \CC$ be a bounded continuous function and $a,b \in \RR$ with $a<b$. Then,
\begin{multline} \label{eq:stolz}
\frac{1}{2}\sprod{\psi}{f(H) (P_{(a,b)} + P_{[a,b]}) \phi} \\ = \lim_{\epsilon \searrow 0} \frac{\epsilon}{\pi} 
\int_a^b f(E) \sprod{\psi}{(H - E - \ii \epsilon)^{-1}(H - E + \ii \epsilon)^{-1}  \phi} \drm E .
\end{multline}
\end{lemma}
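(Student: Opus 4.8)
The plan is to reduce everything to the spectral measure of $H$. By the spectral theorem, for $\phi, \psi \in \mathcal H$ write $\mu(\cdot) = \sprod{\psi}{E_{(\cdot)} \phi}$ for the (complex) spectral measure, so that the left-hand side of \eqref{eq:stolz} becomes $\frac12 \int_\RR f(\lambda)\,\bigl(\chi_{(a,b)}(\lambda) + \chi_{[a,b]}(\lambda)\bigr)\, \drm\mu(\lambda)$, which is exactly $\int_\RR f(\lambda)\, g(\lambda)\,\drm\mu(\lambda)$ with $g(\lambda) = \frac12(\chi_{(a,b)} + \chi_{[a,b]})(\lambda)$ — note $g = \chi_{(a,b)}$ on the interior, $g = \frac12$ at the endpoints $a,b$, and $g = 0$ outside $[a,b]$. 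On the right-hand side, using $(H-E-\ii\epsilon)^{-1}(H-E+\ii\epsilon)^{-1} = \frac{1}{(H-E)^2+\epsilon^2}$ (functional calculus), the integrand over $E$ equals $\int_\RR \frac{f(\lambda)}{(\lambda-E)^2+\epsilon^2}\,\drm\mu(\lambda)$, so the right-hand side is
\[
\lim_{\epsilon\searrow 0} \int_\RR f(\lambda) \left( \frac{\epsilon}{\pi}\int_a^b \frac{\drm E}{(\lambda-E)^2+\epsilon^2} \right) \drm\mu(\lambda),
\]
after an application of Fubini (justified since $f$ is bounded, $\mu$ is finite, and the Poisson-type kernel is integrable in $E$ with total mass $\le 1$).

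Next I would evaluate the inner factor $k_\epsilon(\lambda) := \frac{\epsilon}{\pi}\int_a^b \frac{\drm E}{(\lambda-E)^2+\epsilon^2}$. This is precisely the object controlled by Lemma \ref{lemma:approx_ident}: taking $f \equiv \chi_{(a,b)}$ there (a bounded piecewise continuous function, with one-sided limits $1$ from inside and $0$ from outside at each of $a$ and $b$, and value $\chi_{(a,b)}(\lambda)$ at every continuity point), we get
\[
\lim_{\epsilon\searrow 0} k_\epsilon(\lambda) = \tfrac12\bigl(\chi_{(a,b)}(\lambda-) + \chi_{(a,b)}(\lambda+)\bigr) = g(\lambda) \quad\text{for every }\lambda\in\RR.
\]
Indeed $k_\epsilon(\lambda) = \frac1\pi\bigl(\arctan\frac{b-\lambda}{\epsilon} - \arctan\frac{a-\lambda}{\epsilon}\bigr)$, from which the pointwise limit $g(\lambda)$ and the uniform bound $0 \le k_\epsilon(\lambda) \le 1$ are immediate. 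Then dominated convergence (with dominating function the constant $\|f\|_\infty$, integrable against the finite measure $|\mu|$) lets me pass the limit inside, yielding $\int_\RR f(\lambda) g(\lambda)\,\drm\mu(\lambda)$, which matches the left-hand side. Finally, polarization reduces the general sesquilinear case to the diagonal one $\psi = \phi$ if one prefers to work with positive measures, though it is not strictly necessary here since the finiteness of the total variation of $\mu$ already suffices for all the dominated-convergence steps.

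The main obstacle is not any single hard estimate but getting the bookkeeping at the endpoints exactly right: one must keep careful track of the factor $\frac12$ that appears at $E = a$ and $E = b$ and verify that it is genuinely the arithmetic mean of the one-sided limits of $\chi_{(a,b)}$ — this is the whole content of why the symmetrized projection $\frac12(P_{(a,b)} + P_{[a,b]})$ (and not $P_{(a,b)}$ or $P_{[a,b]}$ alone) is the correct left-hand side, as the remark preceding the lemma emphasizes. The only other point requiring a line of justification is the Fubini swap between the $E$-integral over $[a,b]$ and the spectral integral over $\RR$; this is routine given boundedness of $f$ and finiteness of $\mu$, but should be stated. I would present Lemma \ref{lemma:approx_ident} as doing all the analytic work, so that the proof of Lemma \ref{lemma:stolz} itself is essentially: apply the spectral theorem, use functional calculus to rewrite the resolvent product, swap integrals, invoke Lemma \ref{lemma:approx_ident} pointwise, and close with dominated convergence.
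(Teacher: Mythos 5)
Your overall architecture matches the paper's (spectral theorem, Fubini, Lemma \ref{lemma:approx_ident}, dominated convergence via the $\arctan$ bound), and your remark that one can work directly with the complex spectral measure of finite total variation instead of polarizing is a harmless variant. But there is one genuine gap: after Fubini the right-hand side is
\[
\lim_{\epsilon \searrow 0} \int_\RR \left( \frac{\epsilon}{\pi} \int_a^b \frac{f(E)}{(\lambda - E)^2 + \epsilon^2} \, \drm E \right) \drm \mu(\lambda),
\]
with $f$ evaluated at the \emph{integration variable} $E$, whereas you write it as $\lim_{\epsilon \searrow 0} \int_\RR f(\lambda)\, k_\epsilon(\lambda)\, \drm\mu(\lambda)$ with $k_\epsilon(\lambda) = \frac{\epsilon}{\pi}\int_a^b \frac{\drm E}{(\lambda-E)^2+\epsilon^2}$, i.e.\ with $f$ evaluated at the \emph{spectral variable} $\lambda$ and pulled outside. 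This replacement is not an identity for fixed $\epsilon>0$ and is not a consequence of Fubini; it is exactly the approximate-identity content of the lemma, so attributing it to Fubini and then applying Lemma \ref{lemma:approx_ident} only to the indicator $\chi_{(a,b)}$ leaves the essential step unjustified.

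The repair is short and is precisely what the paper does: apply Lemma \ref{lemma:approx_ident} to the bounded, piecewise continuous function $E \mapsto \chi_{[a,b]}(E) f(E)$, whose one-sided limits at $a$ and $b$ are $0$ and $f(a)$ (resp.\ $f(b)$ and $0$). This gives, pointwise in $\lambda$,
\[
\lim_{\epsilon \searrow 0} \frac{\epsilon}{\pi} \int_a^b \frac{f(E)}{(\lambda - E)^2 + \epsilon^2}\, \drm E = \frac{f(\lambda)}{2}\bigl( \chi_{[a,b]}(\lambda) + \chi_{(a,b)}(\lambda) \bigr),
\]
and the same $\arctan$ computation you use for $k_\epsilon$ shows the inner integral is bounded by $\lVert f \rVert_\infty$ uniformly in $\epsilon$ and $\lambda$, so dominated convergence against the finite (total variation) measure closes the argument. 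Alternatively you could keep your $k_\epsilon$ but then you must separately show that $\frac{\epsilon}{\pi}\int_a^b \frac{f(E)-f(\lambda)}{(\lambda-E)^2+\epsilon^2}\,\drm E \to 0$ pointwise with a uniform bound, which again uses continuity of $f$ and the concentration of the Poisson kernel; either way an extra argument beyond Fubini is needed. With that step inserted, your proof coincides with the paper's.
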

\begin{proof}
We prove Eq. \eqref{eq:stolz} in the case $\psi = \phi$. The case where $\psi \not = \phi$ then follows by polarization. We denote by $\{ E_\lambda \}_{\lambda \in \RR}$ the spectral family associated to the operator $H$ and $\drm \mu_\psi (\lambda) := \drm \! \sprod{\psi}{E_\lambda \psi}$. Notice that the measure $\drm \mu_\psi (\lambda)$ is a positive and finite Borel measure.
Using the spectral theorem and Fubini's theorem, we obtain for the right hand side of Eq. \eqref{eq:stolz}
\begin{align*}
 R &= \lim_{\epsilon \searrow 0} \frac{\epsilon}{\pi}  \int_a^b f(E) \left(  \int_\RR \frac{\drm \mu_\psi (\lambda)}{(\lambda - E)^2 + \epsilon^2} \right) \drm E 
     =  \lim_{\epsilon \searrow 0} \int_\RR  g_\epsilon (\lambda)  \drm \mu_\psi (\lambda) ,
\end{align*} 
where
\[
 g_\epsilon (\lambda) = \frac{\epsilon}{\pi} \int_\RR \frac{\chi_{[a,b]} (E) f(E)}{(\lambda - E)^2 + \epsilon^2} \drm E .
\]
We infer from Lemma \ref{lemma:approx_ident} that 
\[
 \lim_{\epsilon \searrow 0}  g_\epsilon (\lambda) = \frac{f(\lambda)}{2}  \left( \, \chi_{[a,b]} (\lambda) +  \chi_{(a,b)} (\lambda) \right) 
\]
for all $\lambda \in \RR$. Further, for all $\epsilon > 0$ and $\lambda \in \RR$ we have
\begin{align*}
 \abs{g_\epsilon (\lambda)} &\leq \lVert f \rVert_\infty \frac{\epsilon}{\pi} \int_a^b \frac{\drm E}{(\lambda - E)^2 + \epsilon^2} = \frac{\lVert f \rVert_\infty}{\pi} \left[\arctan\Biggl(\frac{\lambda - a}{\epsilon}\Biggr) - \arctan \Biggl(\frac{\lambda - b}{\epsilon}\Biggr) \right] \\ & \leq \lVert f \rVert_\infty .
\end{align*}
Using the dominated convergence theorem we obtain
\[
 R =  \frac{1}{2}\int_\RR f(\lambda) \left( \chi_{[a,b]} (\lambda) +  \chi_{(a,b)} (\lambda) \right)   \drm \mu_\psi (\lambda) = \frac{1}{2} \sprod{\psi}{f(H) (P_{(a,b)} + P_{[a,b]}) \psi} ,
\]
which ends the proof.
\end{proof}
Notice that the case $f \equiv 1$ of Lemma \ref{lemma:stolz} corresponds to Stone's formula, see e.\,g. \cite[Theorem 4.3]{Teschl2009}. From Lemma~\ref{lemma:stolz} one concludes the well known observation that for any $g \in C_{\rm c} ((a,b))$
\begin{equation*}
\sprod{\psi}{g(H) \phi} = \lim_{\epsilon \searrow 0} \frac{\epsilon}{\pi} \int_a^b g(E) \sprod{\psi}{(H - E - \ii \epsilon)^{-1}(H - E + \ii \epsilon)^{-1}  \phi}\drm E .
\end{equation*}
Hence, by Lebesgue's theorem
\begin{multline} \label{eq:stone2}
 \sup_{t \in \RR} \bigl\lvert \bigl\langle \psi ,  \euler^{-\i t H} P_{(a,b)} \phi \bigr\rangle \bigr\rvert 
\leq \liminf_{\epsilon \searrow 0} \frac{\epsilon}{\pi} \int_a^b \bigl\lvert\sprod{\psi}{(H - E - \ii \epsilon)^{-1}(H - E + \ii \epsilon)^{-1}  \phi}\bigr\rvert \drm E .
\end{multline}
\begin{proof}[Proof of Theorem \ref{theorem:result3} (dynamical localization)]
% By Ineq. \eqref{eq:stone2}, Parseval's identity and the %triangle inequality we obtain for all $x,y \in V$
%
% \begin{multline*}
% \sup_{t \in \RR}\bigl\lvert \langle \delta_x , \euler^{-\i t %H_\omega} P_{(a,b)} \delta_y \rangle \bigr\rvert \\ 
%
% \leq \liminf_{\epsilon \searrow 0} \frac{\epsilon}{\pi} %\int_{a}^{b} \left( \sum_{k \in V} \bigl\lvert G_\omega (E+\i %\epsilon; x,k)\bigr\rvert \, \bigl\lvert G_\omega (E-\i %\epsilon ; k,y)\bigr\rvert \right)  \drm E .
% \end{multline*}
%
From Ineq. \eqref{eq:stone2} we conclude using Parseval's identity, Fatou's lemma, Fubini's theorem, Chauchy-Schwarz inequality and Lemma \ref{lemma:s=2} that for all $x,y\in V$
\begin{align}
\EE \Bigl\{ \sup_{t \in \RR} \bigl| \langle & \delta_x , \euler^{-\i t H_\omega} P_{(a,b)} \delta_y \rangle \bigr| \Bigr\} \nonumber \\
&\leq \liminf_{\epsilon \searrow 0} \frac{\epsilon}{\pi} \int_{a}^{b} \left( \sum_{k \in V} \EE \bigl\{ \bigl\lvert G_\omega (E+\i \epsilon; x,k)\bigr\rvert \, \bigl\lvert G_\omega (E-\i \epsilon ; k,y)\bigr\rvert \right) \bigr\}  \drm E \nonumber \\
&\leq \liminf_{\epsilon \searrow 0} \frac{\epsilon}{\pi} \int_{a}^{b} \sum_{k \in V} 
\sqrt{\EE \bigl\{ |G_\omega (E - \i\epsilon;x,k)|^2 \bigr\}}
\sqrt{\EE \bigl\{ |G_\omega (E + \i\epsilon;k,y)|^2 \bigr\}} \drm E  \nonumber\\
&\leq
\frac{\max\{\pi,\Vert \rho \Vert_\infty\} C'}{\pi(b-a)^{-1}} 
\sum_{k \in V} C^{\frac{d(x,k) + d(k,y)}{2}} c_{x}(d(x,k))^{1/2} c_{k}(d(k,y))^{1/2} \label{eq:dyn_decay},
\end{align}
where $C$ and $C'$ are the constants from Theorem \ref{theorem:result1}.
By Parseval's identity and the triangle inequality we obtain for all $o \in V$ and any $p \geq 0$ the estimate
\[
 \norm{\lvert X_o \rvert^p \euler^{- \i t H_\omega} P_{(a,b)} \psi} \leq \norm{\psi}_\infty \sum_{\genfrac{}{}{0pt}{2}{y \in V :}{y \in \supp \psi}} \sum_{x \in V}
\lvert d(o,x) \rvert^p \abs{\langle \delta_x , \euler^{-\i t H_\omega} P_{(a,b)} \delta_y \rangle} .
\]
Since $\psi$ is of compact support, is sufficient to show that for some $o \in V$, all $y \in \supp \psi$ and any $p \geq 0$, 
\[
E := \Bigl\{ \sup_{t \in \RR} \sum_{x \in V}
 \lvert d(o,x) \rvert^p \lvert \langle \delta_x , \euler^{-\i t H_\omega} P_{(a,b)} \delta_y \rangle \rvert \Bigr\} < \infty \quad \text{almost surely}.
\]
By Ineq. \eqref{eq:dyn_decay}, Fubini's theorem and Assumption \ref{ass:dyn}, there is an $o \in V$ such that $\EE\{E\} < \infty$ for all $y \in V$ and any $p \geq 0$. This implies $E < \infty$ almost surely for all $y \in \supp \psi$ and any $p \geq 0$.
\end{proof}
%
% %
% %
% %%%%%%%%%%%%%%%%%%%%%%%%%%%%%%%%%%%%%%%%%%%%%%%%%%%%%%%%%%%%%%%%%%%%%%%%%%%%%%%%%%%
% %----------------------------------------------------------------------------------
% %         A P P E N D I X
% %----------------------------------------------------------------------------------
% %%%%%%%%%%%%%%%%%%%%%%%%%%%%%%%%%%%%%%%%%%%%%%%%%%%%%%%%%%%%%%%%%%%%%%%%%%%%%%%%%%%
% %
% % 
%

%\begin{appendix}
\section{On the Assumptions \ref{ass:spectral} and \ref{ass:dyn}} \label{sec:appendix}
In this section we give proofs for the statements (i)-(iii) given in Remark~\ref{remark:assumptions}. 
To prove statement (i) we use the fact that $\lvert S_x (n) \rvert \leq c_x (n) \leq K(K-1)^{n-1}$ for all $x \in V$ and $n \in \NN$, where $K$ denotes the uniform bound on the vertex degree. With the help of this inequality we have for all $y \in V$
\begin{align*}
 \sum_{k \in V} \alpha^{d(k,y)} c_k (d(k,y)) \leq  \sum_{i=0}^\infty \lvert S_y (i) \rvert \alpha^i \sup_{k \in S_y(i)} c_k (i) 
 \leq \sum_{i=0}^\infty \frac{K^2}{(K-1)^2} \left[ \alpha (K-1)^2 \right]^i
\end{align*}
which is finite if $\alpha$ is sufficiently small. Hence Assumption \ref{ass:spectral} is satisfied if the graph $G$ has uniformly bounded vertex degree. A similar calculation shows that Assumption \ref{ass:dyn} is satisfied if the graph $G$ has uniformly bounded vertex degree.
\par
Statement (ii) of Remark \ref{remark:assumptions} concerns examples of graphs which do not have a uniform bound on the vertex degree but satisfy Assumptions \ref{ass:spectral} and \ref{ass:dyn}. 
\begin{example}\label{ex:T}
We denote by $T = (V,E)$ the rooted and radial symmetric tree with vertex set $V$ and edges set $E$, where the number of offsprings $O(g)$ in generation $g \in \NN_0$ (the root corresponds to generation 0) is given by 
\[
 O (g) = \begin{cases}
          \log_2 g & \text{if $\log_2 g \in \NN$}, \\
		1  & \text{else}. 
         \end{cases}
\]
This tree has no uniform bound on the vertex degree. See Fig.~\ref{fig:T} for an illustration of this tree. 
\begin{figure}[ht] \centering
\begin{tikzpicture}[scale=0.7]
\draw (0,0)--(4,0);
\foreach \x in {0,1,2,3,4}
	\filldraw (\x,0) circle (2pt);
\draw (4,0)--(5,2);
\draw (4,0)--(5,-2);
\draw (8,2)--(9,3);
\draw (8,2)--(9,1);
\draw (8,-2)--(9,-3);
\draw (8,-2)--(9,-1);
\draw (5,2)--(11,2);
\draw (5,-2)--(11,-2);
\draw (9,3)--(11,3);
\draw (9,-3)--(11,-3);
\draw (9,1)--(11,1);
\draw (9,-1)--(11,-1);
\draw[dotted] (11,2)--(13,2);
\draw[dotted] (11,-2)--(13,-2);
\draw[dotted] (11,3)--(13,3);
\draw[dotted] (11,-3)--(13,-3);
\draw[dotted] (11,1)--(13,1);
\draw[dotted] (11,-1)--(13,-1);
\foreach \x in {5,6,7,8,9,10,11,13}
	\filldraw (\x,2) circle (2pt);
\foreach \x in {5,6,7,8,9,10,11,13}
	\filldraw (\x,-2) circle (2pt);
\foreach \x in {9,10,11,13}
	\filldraw (\x,3) circle (2pt);
\foreach \x in {9,10,11,13}
	\filldraw (\x,-3) circle (2pt);
\foreach \x in {9,10,11,13}
	\filldraw (\x,1) circle (2pt);
\foreach \x in {9,10,11,13}
	\filldraw (\x,-1) circle (2pt);
\foreach \x in {0,1,2,3,4,5,6,7,8,9,10,11}
	\draw (\x,-4) node {\x};
\draw (13,-4) node {16};
\draw (14,-4) node {17};
\draw (15,-4) node {18};
\foreach \x in {1,2,3,-1,-2,-3}{
	\draw (13,\x)--(14,\x+0.125);
	\draw (13,\x)--(14,\x+0.375);
	\draw (13,\x)--(14,\x-0.125);
	\draw (13,\x)--(14,\x-0.375);
}
\foreach \z in {14,15}
\foreach \x in {1,2,3,-1,-2,-3}
	\foreach \y in {0.125,0.375,-0.125,-0.375}
		\filldraw (\z,\x+\y) circle (2pt);
\foreach \x in {1,2,3,-1,-2,-3}
	\foreach \y in {0.125,0.375,-0.125,-0.375}
\draw (14,\x+\y)--(15,\x+\y);
\foreach \x in {1,2,3,-1,-2,-3}
	\foreach \y in {0.125,0.375,-0.125,-0.375}
\draw[dotted] (15,\x+\y)--(16,\x+\y);
\end{tikzpicture}
\caption{Illustration of the rooted tree from Example \ref{ex:T} and Remark \ref{remark:assumptions}} 
\label{fig:T}
\end{figure}
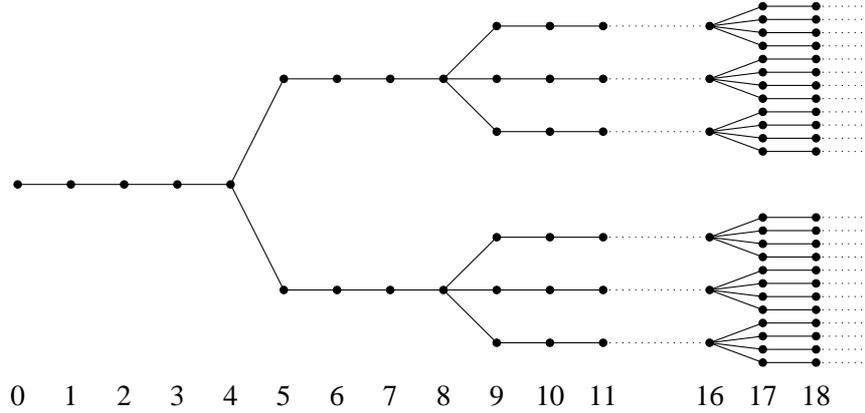
For $x \in V$ we denote by $g(x)$ the generation of the vertex $x$.
In order to show that this tree satisfies Assumptions~\ref{ass:spectral} and \ref{ass:dyn} it is essential to have an estimate on the self-avoiding walks. Since a self-avoiding walk with length $n$ starting at $x$ has at most $\lfloor \log_2 (g(x) + n-1) \rfloor$ neighbors to choose in a single step, we have
\[
 c_x (n) = \left \lfloor \log_2 (g(x) + n - 1) \right\rfloor !
\]
for all $x \in V$ and all $n \in \NN$. Hence we have using $\lvert S_y (n) \rvert \leq c_y (n)$ for all $y \in V$
\begin{align*}
 \sum_{k \in V} \alpha^{d(k,y)} c_k (d(k,y)) &\leq \sum_{i=0}^\infty \alpha^i \lvert S_y (i) \rvert \sup_{k \in S_y (i)} c_k (i) \\
&\leq \sum_{i=0}^\infty \alpha^i \left \lfloor \log_2 (g(y)+i-1) \right \rfloor ! \left \lfloor \log_2 (g(y)+2i-1) \right \rfloor  ! \, ,
\end{align*}
which is finite if $\alpha < 1$, by Stirling's formula and since $x \mapsto (\ln x)^{\ln x}$ grows slower than every exponential function. Hence Assumption \ref{ass:spectral} is satisfied and $\alpha^* = 1$. A similar calculation shows for all $y \in V$ and any $p \geq 0$
\begin{multline*}
 \sum_{x \in V}\sum_{k \in V} \lvert d(o,x) \rvert^p \Bigl(\beta^{d(x,k) + d(k,y)} c_{x}(d(x,k)) c_{k}(d(k,y))\Bigr)^{\frac{1}{2}}\\ 
\leq \sum_{i,j=0}^\infty \beta^{\frac{i+j}{2}} (g(y) + i + j)^p \left( \lfloor \log_2 (g(y) + 2i+2j) \rfloor ! \right)^2 ,
\end{multline*}
where $o$ is the root of the tree. Again by Stirling's formula the sum is convergent if $\beta < 1$. Hence Assumption \ref{ass:dyn} is satisfied and $\beta^* = 1$.
\end{example}
\begin{example}\label{ex:Z}
In this example we consider the graph $\mathcal{G} = (V,E)$ with vertex set $V = \ZZ^d$ and edges set $E$. Two vertices are connected by an edge if their $\ell^1$-distance equals one and for $n = 3,4,5,\ldots$ the vertex $(2^n , 0)$ is connected to all vertices whose $\ell^1$-distance to $(2^n , 0)$ equals $n$. This graph has no uniform bound on the vertex degree. See Fig. \ref{fig:Z} for an illustration of the graph $\mathcal{G}$.
\begin{figure}[ht] \centering
\begin{tikzpicture}[scale=0.33]
\foreach \x in {5,6,7,8,9,10,11,12,13,14,15,16,17,18,19,20,21,22,23,24,25,26,27,28,29,30,31,32,33,34,35,36,37}
	\foreach \y in {-5,-4,-3,-2,-1,0,1,2,3,4,5}
		\filldraw (\x,\y) circle (3pt);
\foreach \x in {5,6,7,8,9,10,11,12,13,14,15,16,17,18,19,20,21,22,23,24,25,26,27,28,29,30,31,32,33,34,35,36}
	\foreach \y in {-5,-4,-3,-2,-1,0,1,2,3,4,5}
	\draw (\x,\y) -- (\x+1,\y);
\foreach \x in {5,6,7,8,9,10,11,12,13,14,15,16,17,18,19,20,21,22,23,24,25,26,27,28,29,30,31,32,33,34,35,36,37}
	\foreach \y in {-5,-4,-3,-2,-1,0,1,2,3,4}
	\draw (\x,\y) -- (\x,\y+1);
\draw (8,0)--(9,2);
\draw (8,0)--(9,-2);
\draw (8,0)--(7,2);
\draw (8,0)--(7,-2);
\draw (8,0)--(10,1);
\draw (8,0)--(10,-1);
\draw (8,0)--(6,1);
\draw (8,0)--(6,-1);
\draw (8,0) ..controls (9.5,0.5).. (11,0);
\draw (8,0) ..controls (6.5,-0.5).. (5,0);
\draw (8,0) ..controls (7.5,1.5).. (8,3);
\draw (8,0) ..controls (8.5,-1.5).. (8,-3);
\draw (16,0)--(17,3);
\draw (16,0)--(17,-3);
\draw (16,0)--(15,3);
\draw (16,0)--(15,-3);
\draw (16,0)--(19,1);
\draw (16,0)--(19,-1);
\draw (16,0)--(13,1);
\draw (16,0)--(13,-1);
\draw (16,0) ..controls (17,1.5).. (18,2);
\draw (16,0) ..controls (15,0.5).. (14,2);
\draw (16,0) ..controls (17,-0.5).. (18,-2);
\draw (16,0) ..controls (15,-1.5).. (14,-2);
\draw (16,0) ..controls (18,0.5).. (20,0);
\draw (16,0) ..controls (14,-0.5).. (12,0);
\draw (16,0) ..controls (15.5,2).. (16,4);
\draw (16,0) ..controls (16.5,-2).. (16,-4);
\draw (32,0)--(33,4);
\draw (32,0)--(34,3);
\draw (32,0)--(35,2);
\draw (32,0)--(36,1);
\draw (32,0)--(33,-4);
\draw (32,0)--(34,-3);
\draw (32,0)--(35,-2);
\draw (32,0)--(36,-1);
\draw (32,0)--(31,4);
\draw (32,0)--(30,3);
\draw (32,0)--(29,2);
\draw (32,0)--(28,1);
\draw (32,0)--(31,-4);
\draw (32,0)--(30,-3);
\draw (32,0)--(29,-2);
\draw (32,0)--(28,-1);
\draw (32,0) ..controls (31.5,2.5).. (32,5);
\draw (32,0) ..controls (34.5,0.5).. (37,0);
\draw (32,0) ..controls (32.5,-2.5).. (32,-5);
\draw (32,0) ..controls (29.5,-0.5).. (27,0);
\foreach \x in {5,6,7,8,9,10,11,12,13,14,15,16,17,18,19,20,21,22,23,24,25,26,27,28,29,30,31,32,33,34,35,36,37}
{
\draw[dotted] (\x,5)--(\x,6);
\draw[dotted] (\x,-5)--(\x,-6);
}
\foreach \y in {-5,-4,-3,-2,-1,0,1,2,3,4,5}
{
\draw[dotted] (37,\y)--(38,\y);
\draw[dotted] (5,\y)--(4,\y);

}
\end{tikzpicture}
\caption{Illustration of the graph from Example \ref{ex:Z} and Remark \ref{remark:assumptions}} 
\label{fig:Z}
\end{figure}
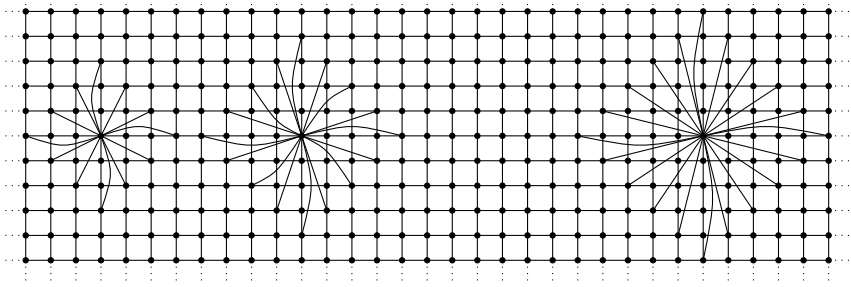
We start estimating $c_0 (n)$, i.\,e. the number of self-avoiding walks of length $n$ starting at $(0,0)$. In $n$ steps one can not get more in the east than $(2n,0)$. Hence the maximum vertex degree for the walk is $L_{\rm max} = 4(\lfloor \log_2 (2n) \rfloor+1)$. Since the walk is self-avoiding, a vertex with vertex degree larger than $5$ and a vertex with vertex degree equal to 5 can be visited only $\lfloor \log_2 (2n) \rfloor - 2$ times. Hence
\[
 c_0 (n) \leq 4 \cdot 3^{n-1}  \cdot (4L_{\rm max})^{\lfloor\log_2 (2n) \rfloor - 2} .
\]
Analogously we obtain for arbitrary $x \in V$
\[
 c_x (n) \leq 4 \cdot 3^{n-1}  \cdot (4K_{\rm max})^{\lfloor\log_2 (2n+\lvert x \rvert_1) \rfloor - 2},
\]
where
\[
  K_{\rm max} = 4(\lfloor \log_2 (2n + \lvert x \rvert_1) \rfloor+1) .
\]
The number of self-avoiding walks grows ``slowly enough'', such that a calculation similar to the one of Example \ref{ex:T} shows us that Assumptions \ref{ass:spectral} and \ref{ass:dyn} are satisfied for the graph $\mathcal{G}$.
\end{example}
\begin{remark}
 What we have learned from Example \ref{ex:T} and Example \ref{ex:Z} is the following. Let $G = (V,E)$ be a locally finite graph. If for each $y \in V$ there are constants $a,b>0$ such that $c_y (n) \leq a \cdot b^n$ for all $n \in \NN$, then the graph satisfies Assumption \ref{ass:spectral}. If there are constants $a,b > 0$ such that $c_y (n) \leq a \cdot b^n$ for all $n \in \NN$ and all $y \in V$, then the graph satisfies also Assumption \ref{ass:dyn}.
\end{remark}

%\par
Let us now discuss statement (iii) of Remark \ref{remark:assumptions}. Let us first define a class of graphs which Hammersley \cite{Hammersley1957} called crystals. A \emph{crystal} is a graph $G = (V,E)$ with an infinite set of vertices and bonds satisfying
\begin{enumerate}[(a)]
 \item Each vertex belongs to just a finite number of outlike classes.
 \item The number of bonds from (but not necessarily to) any atom is finite.
 \item If a subset of vertices either contains only finitely many vertices, or does not contain any vertex of at least one outlike class, then this subset contains a vertex from which a bond leads to some atom not in the subset.
\end{enumerate}
An \emph{outlike class} is defined as follows. Two vertices $x,y \in V$ are outlike if $c_x (n) = c_y (n)$ for all $n$. An outlike class is a class of pairwise outlike vertices. In the case of undirected graphs condition (b) means that the graph is locally finite. Hammersley proved in \cite{Hammersley1957} for crystals, that there exists a constant $\mu$, called the \emph{connective constant}, such that
\[
 \lim_{n \to \infty} c_x (n)^{\frac{1}{n}} = \mu \quad \text{for all $x \in V$}.
\]
Notice that crystals are assumed to have a finite number of outlike classes. This ensures that for all $\epsilon > 0$ there exists an $n_0 = n_0 (\epsilon)$ such that 
\[
c_y (n) \leq (\mu + \epsilon)^n
\]
for all $n \geq n_0$ and all $y \in V$. In particular, $n_0$ is independent of $y$.
For crystals we have the following
\begin{lemma}
 Let $G=(V,E)$ be a crystal.
\begin{enumerate}[(a)]
 \item Assume that for each vertex $y \in V$ there is a polynomial $p_y$ such that $\lvert S_y (n) \rvert \leq p_y (n)$ for all $n \in \mathbb{N}$. Then Assumption \ref{ass:spectral} is satisfied and $\alpha^* = 1 / \mu$. 
 \item Assume there is a polynomial $p$, such that $\lvert S_y (n) \rvert \leq p(n)$ for all $n \in \mathbb{N}$ and $y \in V$. Then Assumption \ref{ass:dyn} is satisfied and $\beta^* = 1/\mu$.
 \item Assume that there are $a \in (0,\infty)$ and $b \in (1,\infty)$ such that $\lvert S_y (n) \rvert \leq ab^n$ for all $y \in V$ and $n \in \mathbb{N}$. Then Assumptions \ref{ass:spectral} and \ref{ass:dyn} are satisfied and $\alpha^* , \beta^* \geq 1/(b \mu)$. 
\end{enumerate}
\end{lemma}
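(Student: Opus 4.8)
\emph{Overall strategy.} The whole proof rests on two \emph{uniform} exponential estimates for the self-avoiding-walk counts on a crystal. The first is already recorded above: for every $\epsilon>0$ there is an $n_0(\epsilon)$, \emph{independent of the vertex}, with $c_y(n)\le(\mu+\epsilon)^n$ for all $y\in V$ and all $n\ge n_0(\epsilon)$. The second is its lower counterpart: since there are only finitely many outlike classes and $c_x(n)^{1/n}\to\mu$ for each fixed $x$, there is an $m_0(\epsilon)$, again independent of the vertex, with $c_y(n)\ge(\mu-\epsilon)^n$ for all $y\in V$ and all $n\ge m_0(\epsilon)$. I would also use throughout that $S_y(n)\neq\emptyset$ for every $n\in\NN$ and every $y$ (the graph is infinite, connected and locally finite, so a geodesic towards a sufficiently far vertex meets every sphere), and that $\mu\ge1$.

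\emph{Lower bounds for $\alpha^*$ and $\beta^*$.} For Assumption \ref{ass:spectral} I would write $\sum_{k\in V}\alpha^{d(k,y)}c_k(d(k,y))=\sum_{n\ge0}\alpha^n\sum_{k\in S_y(n)}c_k(n)$, split off the finite block $n<n_0(\epsilon)$, and on the tail use $c_k(n)\le(\mu+\epsilon)^n$ together with the bound on $\#S_y(n)$ from the hypothesis of the relevant part. The tail becomes $\sum_{n\ge n_0(\epsilon)}\#S_y(n)\,(\alpha(\mu+\epsilon))^n$, a geometric-type series converging as soon as $\alpha(\mu+\epsilon)$ times the exponential growth rate of the spheres is $<1$: in parts (a) and (b) the spheres grow sub-exponentially, so this holds for every $\alpha<1/\mu$, giving $\alpha^*\ge1/\mu$; in part (c) the extra factor $b$ per unit of length forces $\alpha<1/(b\mu)$, giving $\alpha^*\ge1/(b\mu)$. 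For Assumption \ref{ass:dyn}, after inserting $c_x(\cdot)\le(\mu+\epsilon)^{\cdot}$ and $c_k(\cdot)\le(\mu+\epsilon)^{\cdot}$ the summand becomes $\lvert d(o,x)\rvert^p\,\eta^{(d(x,k)+d(k,y))/2}$ with $\eta=\beta(\mu+\epsilon)$. I would first absorb the polynomial weight into a slowly growing exponential, $\lvert d(o,x)\rvert^p\le C\kappa^{d(o,x)}\le C\kappa^{d(o,y)}\kappa^{d(x,k)+d(k,y)}$ with $\kappa>1$ arbitrarily close to $1$; then, for parts (a) and (b), split $d(x,k)+d(k,y)$ via the triangle inequality so as to keep genuine decay both in $d(x,k)$ (for the inner sum over $k$) and in $d(x,y)$ (for the outer sum over $x$), applying the polynomial sphere bound twice; for part (c), where such a splitting is too lossy against an exponentially growing sphere, I would instead organise the double sum by the value $s=d(x,k)+d(k,y)$ and count the pairs $(x,k)$ with this value of $s$ through the sphere bounds. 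In part (b) every resulting series converges for any $\beta<1/\mu$, so $\beta^*\ge1/\mu$; in part (c), carrying the base $b$ through the estimate produces the asserted lower bound for $\beta^*$ in terms of $b$ and $\mu$.

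\emph{Matching upper bounds in (a) and (b).} To obtain $\alpha^*\le1/\mu$ I would show the sum in \eqref{eq:spectral} diverges once $\alpha>1/\mu$. Fix such an $\alpha$ and pick $\epsilon$ with $\alpha(\mu-\epsilon)>1$. For any $y$ and any $n\ge m_0(\epsilon)$ choose some $k_n\in S_y(n)$; the $n$-th block of $\sum_{k\in V}\alpha^{d(k,y)}c_k(d(k,y))$ is then at least $\alpha^n c_{k_n}(n)\ge(\alpha(\mu-\epsilon))^n\to\infty$, so the series diverges for \emph{every} $y$, and no $\alpha>1/\mu$ lies in the set defining $\alpha^*$; with the previous step, $\alpha^*=1/\mu$. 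For Assumption \ref{ass:dyn} I would specialise to $x=y=o$ and $p=0$: the term at a vertex $k$ with $d(o,k)=n\ge m_0(\epsilon)$ equals $\beta^n\bigl(c_o(n)c_k(n)\bigr)^{1/2}\ge(\beta(\mu-\epsilon))^n$, so summing even one vertex per sphere forces divergence when $\beta(\mu-\epsilon)>1$; since this occurs for every candidate base point $o$, we get $\beta^*\le1/\mu$, hence $\beta^*=1/\mu$.

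\emph{Main obstacle.} The argument lives or dies by the \emph{uniformity in $y$} of the two-sided estimate $c_y(n)=\mu^{n(1+o(1))}$: for a general locally finite graph one only knows $c_y(n)^{1/n}\to\mu$ for each fixed $y$, which is far too weak to control the sum over $k$ in \eqref{eq:spectral}, let alone the sums over both $x$ and $k$ in \eqref{eq:dynamical}. Supplying this uniformity is exactly the role of the crystal hypothesis that there are finitely many outlike classes. The second, purely technical, difficulty is in part (c): in the double sum of Assumption \ref{ass:dyn} the sphere-growth base $b$ competes with the $\mu$-decay, so the summation must be arranged — through the triangle inequality and the grouping of the pairs $(x,k)$ by $d(x,k)+d(k,y)$ — so that the geometric ratios deciding convergence stay controlled by the appropriate powers of $b$ and $\mu$.
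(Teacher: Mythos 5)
Your treatment of the convergence half (the bounds $\alpha^*\ge 1/\mu$, $\beta^*\ge 1/\mu$ in (a), (b) and $\alpha^*\ge 1/(b\mu)$ in (c)) is essentially the paper's proof: the uniform estimate $c_y(n)\le(\mu+\epsilon)^n$ for $n\ge n_0(\epsilon)$ coming from the finitely many outlike classes, a decomposition of \eqref{eq:spectral} and \eqref{eq:dynamical} along spheres, and the polynomial resp.\ exponential sphere counts; your bookkeeping for \eqref{eq:dynamical} (absorbing $\lvert d(o,x)\rvert^p$ into $\kappa^{d(o,x)}$ and splitting $d(x,k)+d(k,y)$ by the triangle inequality) is a harmless variant of the paper's double-sphere decomposition with the weight $(d(o,y)+i+j)^p$. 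Your matching upper bounds $\alpha^*\le 1/\mu$, $\beta^*\le 1/\mu$, obtained from the uniform lower bound $c_y(n)\ge(\mu-\epsilon)^n$ (again a consequence of the finitely many outlike classes) together with the nonemptiness of all spheres, are sound and in fact supply a direction that the paper's written proof does not carry out at all. The only blemish there is the specialization $x=y=o$, $p=0$: the set defining $\beta^*$ is phrased with $p>0$, for which the $x=o$ term carries the weight $0^p=0$; taking $x$ a fixed neighbour of $o$ instead repairs this trivially.

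There is, however, a genuine gap in part (c) for $\beta^*$. Grouping the pairs $(x,k)$ by $i=d(x,k)$, $j=d(k,y)$, the sphere hypothesis gives at most $a^2b^{i+j}$ pairs, while, because of the square root in \eqref{eq:dynamical}, each term is only of size roughly $(\beta(\mu+\epsilon))^{(i+j)/2}$ times a polynomial. Your series therefore converges only when $b\sqrt{\beta\mu}<1$, i.e.\ $\beta<1/(b^2\mu)$; ``carrying the base $b$ through the estimate'' does not produce the asserted threshold $1/(b\mu)$, and no rearrangement can. Indeed, on the $(q+1)$-regular tree one has $\mu=q$ and $\lvert S_y(n)\rvert=(q+1)q^{n-1}$, so $b=q$ is admissible; the number of pairs with $d(x,k)=i$, $d(k,y)=j$ is comparable to $q^{i+j}$ and each summand is comparable to $(\beta q)^{(i+j)/2}$, so the double sum in \eqref{eq:dynamical} diverges for every $\beta>q^{-3}=1/(b^2\mu)$, strictly below the claimed $1/(b\mu)=q^{-2}$. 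Thus your sketch (and, for that matter, the paper's one-line ``the proof of (c) is similar'') only yields $\beta^*\ge 1/(b^2\mu)$, and the stronger claim for $\beta^*$ appears to be unattainable; the bound $\alpha^*\ge 1/(b\mu)$ for Assumption \ref{ass:spectral} is unaffected, since no square root occurs there.
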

\begin{proof}
 First we prove (a). Let $\alpha < 1/\mu$. Then there is $\epsilon_0 > 0$ such that $\alpha = 1/(\mu + \epsilon_0)$. Let $\epsilon \in (0,\epsilon_0)$. By assumption, there is an $n_0 \in \NN$ such that for all $y \in V$  
\begin{align*}
 \sum_{k \in V} \alpha^{d(k,y)} c_k(d(k,y)) &\leq \sum_{i=0}^\infty \alpha^i p_y (i) \sup_{k \in S_y (i)} c_k (i) \\
&\leq \sum_{i=0}^{n_0}\alpha^i p_y (i) \sup_{k \in S_y (i)} c_k (i) + 
       \sum_{i=n_0 + 1}^{\infty} \left( \frac{\mu + \epsilon}{\mu+\epsilon_0} \right)^i p_y (i) ,
\end{align*}
which is finite since $\epsilon < \epsilon_0$. This proves (a). To prove (b) let $\beta < 1/\mu$. Let $\epsilon_0 > 0$ such that $\beta = 1/(\mu + \epsilon_0)$ and $\epsilon \in (0,\epsilon_0)$. Then there is $n_0 \in \NN$ such that $c_x (n) \leq (\mu + \epsilon)^{n}$ for all $x \in V$ and $n \geq n_0$. We have for all $y \in V$
\begin{multline*}
 \sum_{x \in V}\sum_{k \in V} \lvert d(o,x) \rvert^p \Bigl(\beta^{d(x,k) + d(k,y)} c_{x}(d(x,k)) c_{k}(d(k,y))\Bigr)^{\frac{1}{2}} \\
\leq \sum_{i,j=0}^\infty p(i)p(j) \left(\beta^j \sup_{x \in S_y (j)} c_x (j)\right)^{1/2} (d(o,y)+i+j)^p \left(\beta^i \sup_{x \in B_y (j+i)} c_x (i)\right)^{1/2} ,
\end{multline*}
where $B_y (n) = \{k \in V : d(y,k) \leq n\}$. If $i \geq n_0$ we have $c_x (i)\leq (\mu + \epsilon)^{i}$, hence the sum converges, which proves (b). The proof of (c) is similar. What differs is that the spheres can only be estimated by exponential functions. As a consequence the convergence of the sums can only be ensured if $\alpha$ and $\beta$ are chosen smaller than $1/(\mu b)$.
\end{proof}
In particular $\ZZ^d$ is a crystal and the spheres of $\ZZ^d$ grow polynomially. Hence one can replace the term $2d$ in Theorem \ref{theorem:aizenman} by $\mu$. Notice that $\mu$ is typically smaller than $2d$, see e.\,g. \cite{HaraSS1993}.
\section*{Acknowledgements}
It is a pleasure to me to thank my advisor Ivan Veseli\'c for the suggestion to pursue this research direction and advice during this work. I would also like to thank Matthias Keller, Fabian Schwarzenberger and G\"u{}nter Stolz for useful discussions.
%
%\bibliographystyle{amsalpha}
%\bibliography{lit}   % name your BibTeX data base

\begin{thebibliography}{dRJLS96}

\bibitem[AG98]{AizenmanG1998}
M.~Aizenman and G.~M. Graf, \emph{Localization bounds for an electron gas}, J.
  Phys. A \textbf{31} (1998), 6783--6806.

\bibitem[Aiz94]{Aizenman1994}
M.~Aizenman, \emph{Localization at weak disorder: some elementary bounds}, Rev.
  Math. Phys. \textbf{6} (1994), 1163--1182.

\bibitem[AM93]{AizenmanM1993}
M.~Aizenman and S.~Molchanov, \emph{Localization at large disorder and at
  extreme energies: An elemantary derivation}, Commun. Math. Phys. \textbf{157}
  (1993), 245--278.

\bibitem[And58]{Anderson1958}
P.~W. Anderson, \emph{Absence of diffusion in certain random lattices}, Phys.
  Rev. \textbf{109} (1958), 1492--1505.

\bibitem[ASFH01]{AizenmanFSH2001}
M.~Aizenman, J.~H. Schenker, R.~M. Friedrich, and D.~Hundertmark,
  \emph{Finite-volume fractional-moment criteria for {A}nderson localization},
  Commun. Math. Phys. \textbf{224} (2001), 219--253.

\bibitem[Bea79]{Bear1979}
H.~S. Bear, \emph{Approximative identities and pointwise convergence}, Pac. J.
  Math. \textbf{81} (1979), 17--27.

\bibitem[CFKS87]{CyconFKS1987}
H.~L. Cycon, R.~G. Froese, W.~Kirsch, and B.~Simon, \emph{Schr\"o{}dinger
  operators}, Springer, Berlin, 1987.

\bibitem[dRJLS96]{RioJLS1996}
R.~del Rio, S.~Jitomirskaya, Y.~Last, and B.~Simon, \emph{Operators with
  singular continuous spectrum, iv. hausdorff dimensions, rank one
  perturbations, and localization}, J. Anal. Math. \textbf{69} (1996),
  153--200.

\bibitem[FMSS85]{FroehlichMSS1985}
J.~Fr\"o{}hlich, F.~Martinelli, E.~Scoppola, and T.~Spencer, \emph{Constructive
  proof of localization in the {A}nderson tight binding model}, Commun. Math.
  Phys. \textbf{101} (1985), 21--46.

\bibitem[FS83]{FroehlichS1983}
J.~Fr\"o{}hlich and T.~Spencer, \emph{Absence of diffusion in the {A}nderson
  tight binding model for large disorder or low energy}, Commun. Math. Phys.
  \textbf{88} (1983), 151--184.

\bibitem[Fuj96]{Fujiwara1996}
K.~Fujiwara, \emph{The {L}aplacian on rapidly branching trees}, Duke Math. J.
  \textbf{83} (1996), 191--202.

\bibitem[Gra94]{Graf1994}
G.~M. Graf, \emph{{A}nderson localization and the space-time characteristic of
  continuum states}, J. Stat. Phys. \textbf{75} (1994), 337--346.

\bibitem[Ham57]{Hammersley1957}
J.~M. Hammersley, \emph{Percolation processes {II}. the connective constant},
  Proc. Camb. Phil. Soc. \textbf{53} (1957), 642--645.

\bibitem[Ham63]{Hammersley1963}
\bysame, \emph{Long-chain polymers and self-avoiding random walks {II}},
  Sankhya Ser. A \textbf{25} (1963), 269--272.

\bibitem[HJS09]{HamzaJS2009}
E.~Hamza, A.~Joye, and G.~Stolz, \emph{Dynamical localization for unitary
  {A}nderson models}, Math. Phys. Anal. Geom. \textbf{12} (2009), 381--444.

\bibitem[HSS93]{HaraSS1993}
T.~Hara, G.~Slade, and D.~Sokal, \emph{New lower bounds on the
  self-avoiding-walk connective constant}, J. Stat. Phys. \textbf{72} (1993),
  479--517.

\bibitem[Hun00]{Hundertmark2000}
D.~Hundertmark, \emph{On the time-dependent approach to {A}nderson
  localization}, Math. Nachr. \textbf{214} (2000), 25--38.

\bibitem[Hun08]{Hundertmark2008}
\bysame, \emph{A short introduction to {A}nderson localization}, Analysis and
  Stochastics of Growth Processes and Interface Models, Oxford Scholarship
  Online Monographs, 2008, pp.~194--219.

\bibitem[Jor08]{Jorgensen2008}
P.~E.~T. Jorgensen, \emph{Essentially selfadjointness of the
  graph-{L}aplacian}, J. Math. Phys \textbf{49} (2008), 073510.

\bibitem[Kel10]{Keller2010}
M.~Keller, \emph{The essential spectrum of the {L}aplacian on rapidly branching
  tessellations}, Math. Ann. \textbf{346} (2010), 51--66.

\bibitem[KL11]{KellerL2011}
M.~Keller and D.~Lenz, \emph{Dirichlet forms and stochastic completeness of
  graphs and subgraphs}, to appear in J. reine angew. Math. (2011).

\bibitem[Rud87]{Rudin1987}
W.~Rudin, \emph{Real and complex analysis}, McGraw-Hill, New York, 1987.

\bibitem[Rue69]{Ruelle1969}
D.~Ruelle, \emph{A remark on bound states in potential scattering theory}, Riv.
  Nuovo Cimento \textbf{61A} (1969), 655--662.

\bibitem[Sto10]{Stolz2010}
G.~Stolz, \emph{An introduction to the mathematics of {A}nderson localization},
  Lecture notes of the Arizona School of Analysis with Applications, 2010,
  \url{http://www.mathphys.org/AZschool/materials.html}.

\bibitem[Tes09]{Teschl2009}
G.~Teschl, \emph{Mathematical methods in quantum mechanics: With applications
  to schr\"o{}dinger operators}, Graduate Studies in Mathematics, vol.~99,
  American Mathematical Society, 2009.

\bibitem[vDK89]{DreifusK1989}
H.~von Dreifus and A.~Klein, \emph{A new proof of localization in the
  {A}nderson tight binding model}, Commun. Math. Phys. \textbf{124} (1989),
  285--299.

\bibitem[Web10]{Weber2010}
A.~Weber, \emph{Analysis of the physical {L}aplacian and the heat flow on a
  locally finite graph}, J. Math. Anal. Appl. \textbf{370} (2010), 146--158.

\bibitem[Woj08]{Wojciechowski2007}
R.~K. Wojciechowski, \emph{Stochastic completeness of graphs}, Ph.D. thesis,
  The Graduate Center of the City University of New York, 2008,
  arXiv:0712.1570v2 [math.SP].

\end{thebibliography}
%
\providecommand{\bysame}{\leavevmode\hbox to3em{\hrulefill}\thinspace}
\providecommand{\MR}{\relax\ifhmode\unskip\space\fi MR }
% \MRhref is called by the amsart/book/proc definition of \MR.
\providecommand{\MRhref}[2]{%
  \href{http://www.ams.org/mathscinet-getitem?mr=#1}{#2}
}
\providecommand{\href}[2]{#2}

\end{document}